\numberwithin{equation}{section}
\theoremstyle{plain}
\newtheorem{theorem}{Theorem}[section]
\newtheorem{proposition}[theorem]{Proposition}
\newtheorem{lemma}[theorem]{Lemma}
\numberwithin{equation}{section}
\theoremstyle{definition}
\newtheorem{definition}[theorem]{Definition}
\newtheorem{remark}[theorem]{Remark}
\newtheorem{example}[theorem]{Example}
\DeclareMathOperator{\rank}{rank}
\def\cut{\mathrm{cut}}
\def\Z{\mathbb{Z}}
\def\Q{\mathbb{Q}}
\def\R{\mathbb{R}}
\def\C{\mathbb{C}}
\def\cF{\cF}
\def\e{\mathbf{e}}
\def\B{\mathcal{B}}
\def\cF{\mathcal{F}}
\def\cC{\mathcal{C}}
\def\cR{\mathcal{R}}
\def\St{\mathrm{St}\,}
\def\Lk{\mathrm{Lk}\,}
\def\PP{\mathrm{Poin}}
\def\blue#1{\textcolor{blue}{#1}}
\def\red#1{\textcolor{red}{#1}}
\tikzstyle{v}=[circle, draw, solid, fill=black!50, inner sep=0pt, minimum width=4pt]
\newcommand{\row}{\mathrm{Row}}
\begin{document}
\title{Pseudograph and its associated real toric manifold}
\author{Suyoung Choi}
\address{Department of mathematics, Ajou University,
Suwon 16499,
Republic of Korea}
\thanks{The first named author was supported by Basic Science Research Program through the National Research Foundation of Korea(NRF) funded by the Ministry of Science, ICT \& Future Planning(NRF-2012R1A1A2044990).}
\email{schoi@ajou.ac.kr}

\author{Boram Park}
\address{Department of mathematics, Ajou University,
Suwon 16499, Republic of Korea}\thanks{The second named author was supported by Basic Science Research Program through the National Research Foundation of Korea (NRF) funded by the Ministry of  Science, ICT \& Future Planning (NRF-2015R1C1A1A01053495)}
\email{borampark@ajou.ac.kr}

\author{Seonjeong Park}
\address{Osaka City University Advanced Mathematical Institute (OCAMI), 3-3-138 Sugimoto, Sumiyoshi-ku, Osaka-shi, 558-8585, Japan}
\email{seonjeong1124@gmail.com}

\subjclass[2010]{Primary 55U10; Secondary 57N65, 05C30}

\keywords{pseudograph associahedron, real toric variety}

\date{\today}
\maketitle

\begin{abstract}
    Given a simple graph $G$, the graph associahedron $P_G$ is a convex polytope whose facets correspond to the connected induced subgraphs of $G$. Graph associahedra have been studied widely and are found in a broad range of subjects. Recently, S. Choi and H. Park computed the rational Betti numbers of the real toric variety corresponding to a graph associahedron under the canonical Delzant realization.
    In this paper, we focus on a pseudograph associahedron which was introduced by Carr, Devadoss and Forcey, and then discuss how to compute the Poincar\'{e} polynomial of the real toric variety corresponding to a pseudograph associahedron under the canonical Delzant realization.
\end{abstract}

\section{Introduction}\label{sec1}
    A \emph{toric variety} of complex dimension $n$ is a normal algebraic variety over $\C$ with an effective algebraic action of $(\C \setminus \{O\})^n$ having an open dense orbit. A compact smooth toric variety is called a \emph{toric manifold}, and the subset consisting of points with real coordinates of a toric manifold is called a \emph{real toric manifold}.
    One of the most important facts in toric geometry, the so called \emph{fundamental theorem of toric geometry}, is that there is a 1-1 correspondence between the class of toric varieties of complex dimension $n$ and the class of fans in $\R^n$. In particular, a toric manifold $X$ of complex dimension $n$ corresponds to a complete non-singular fan $\Sigma_X$ in $\R^n$. Furthermore, if $X$ is projective, then $\Sigma_X$ can be realized as the normal fan of some simple polytope of real dimension $n$.
    A simple polytope $P$ of dimension $n$ is called \emph{Delzant} if for each vertex $p \in P$, the outward normal vectors of the facets containing $p$ can be chosen to make up an integral basis for $\Z^n$. Note that the normal fan of a Delzant polytope is a complete non-singular fan and thus defines a projective toric manifold and a real toric manifold by the fundamental theorem as well.

    The (integral) Betti numbers of a toric manifold $X$ of complex dimension $n$ are easily computed. It is known by the theorem of Danilov\cite{Dan78}-Jurkiewicz\cite{Jur80} that the Betti numbers of $X$ vanish in odd degrees and the $2i$th Betti number of $X$ is equal to $h_i$, where $(h_0, h_1, \ldots, h_n)$ is the $h$-vector of $\Sigma_X$.
    Unlike toric manifolds, however, only little is known about the topology of real toric manifolds.
    In~\cite{ST2012} and \cite{Tre2012}, Suciu and Trevisan have found a formula for the rational cohomology groups of  a real toric manifold, see also \cite{CP2}. Let $P$ be a Delzant polytope of dimension~$n$. Let $\cF=\{F_1,\ldots,F_m\}$ be the set of facets of $P$. Then, the outward normal vectors of $P$ can be understood as a function $\phi$ from $\cF$ to $\Z^n$, and the composition map $\lambda \colon \cF \stackrel{\phi}{\to} \Z^n \stackrel{\text{mod $2$}}{\longrightarrow} \Z_2^n$ is called the (mod $2$) \emph{characteristic function} over $P$. Note that $\lambda$ can be represented by a $\Z_2$-matrix $\Lambda$ of size $n \times m$ as
    $$
    \Lambda = \begin{pmatrix}
      \lambda(F_1) & \cdots & \lambda(F_m)
    \end{pmatrix},
    $$ where the $i$th column of $\Lambda$ is $\lambda(F_i) \in \Z_2^n$.
    For $\omega \in \Z_2^m$, we define $P_\omega$ to be the union of facets $F_j$ such that the $j$th entry of $\omega$ is nonzero.
    Denote by $M_\lambda(P)$ the real toric manifold corresponding to $P$ and $\lambda$. Then the $i$th rational Betti number of $M_\lambda(P)$ is given by
    \begin{equation}
        \beta^i (M_\lambda(P)) = \sum_{\omega\in \row(\lambda)} \rank_{\Q} \widetilde H^{i-1}(P_\omega;\Q), \label{eq:formula}
    \end{equation}
    where $\row(\lambda)$ is the space of $m$-dimensional $\Z_2$-vectors spanned by the rows of
    $\Lambda$ associated with $\lambda$.
    S. Choi and H. Park \cite{CP2} reproved the formula \eqref{eq:formula} by using different methods, and showed that it holds even for the cohomology group of arbitrary coefficient ring in which $2$ is a unit.
    Furthermore, it is known in \cite{CKT} that the suspension of $M$ localized at an odd prime $p$ (or $p=0$, which is known as the rationalization) can be decomposed as
    \begin{equation}\label{eq:sus}
       \Sigma M \simeq_p \Sigma \bigvee_{\omega \in \row(\lambda)} \Sigma P_\omega.
    \end{equation}

    Recently, the rational Betti numbers of some interesting family of real toric manifolds, arising from simple graphs, have been computed by using \eqref{eq:formula} in \cite{CP}.
    Let $G$ be a simple graph with node set $[n+1]:=\{1,\ldots,n+1\}$ and $\B(G)$ the \emph{graphical building set} which is the collection of subsets of $[n+1]$ all of whose elements are obtained from connected induced subgraphs of $G$.
    For $I\subset [n+1]$, let $\Delta_I$ be the simplex given by the convex hull of points $\e_i$, $i\in I$, where $\e_i$ is the $i$th standard basis vector. Then define the \emph{graph associahedron} $P_G$ as the Minkowski sum of simplices
    \begin{equation}
      P_G = \sum_{I\in\B(G)} \Delta_I. \label{eq:Minkowski_sum}
    \end{equation}
    One can see that $G$ is connected if and only if $P_G$ is $n$-dimensional. We note that, for every connected graph $G$ with $n+1$ nodes, there is a canonical way to realize $P_G$ as a Delzant polytope from $\Delta^n$ in $\R^n$ by truncating its faces corresponding to elements of $\B(G) \setminus \{[n+1]\}$ (refer \cite{Zel2006} or \cite[Proposition 7.10]{P05}).
    We note that to each facet of~$P_G$ there is an assigned element $I$ of $\B(G) \setminus \{[n+1]\}$. In this case, by choosing an appropriate basis of $\Z_2^n$, the characteristic function $\lambda_G$ is
    $$
    \lambda_G (I) = \left\{
                      \begin{array}{ll}
                        \sum_{i \in I} \e_i, & \hbox{if $n+1 \not\in I$;} \\
                        \sum_{i \not\in I} \e_i, & \hbox{if $n+1 \in I$}.
                      \end{array}
                    \right.
    $$
    We denote by $M_G$ the real toric manifold $M_{\lambda_G}(P_G)$ corresponding to $P_G$ under the canonical Delzant realization.

    In \cite{CP}, the rational Betti numbers of $M_G$ are computed in terms of the $a$-numbers of $G$. The authors noted that, for each simple graph $G$ with an even number of nodes, there is the unique element $\omega_0(G) \in \row(\lambda_G)$ such that the component of $\omega_0(G)$ corresponding to every singleton in $\B(G)$ is nonzero. The remarkable fact discovered in \cite{CP} is that for $\omega \in \row(\lambda_G)$, $(P_G)_\omega$ is homotopy equivalent to $(P_H)_{\omega_0(H)}$ for some subgraph $H$ of $G$ induced by an even number of nodes. Furthermore, $(P_H)_{\omega_0(H)}$ is homotopy equivalent to a wedge of spheres of the same dimension. Therefore, they concluded that $(P_G)_\omega$ is homotopy equivalent to a wedge of spheres of the same dimension for each $\omega \in \row(\lambda_G)$.

    In this paper, we discuss the pseudographical analogue of the above phenomena. A \emph{pseudograph} $G$ is a graph in which both loops and multiple edges are permitted. The notion of a pseudograph associahedron is originally introduced in \cite{CDF2011} as a generalization of a graph associahedron. Indeed, it coincides with the graph associahedron $P_G$ if $G$ is a connected simple graph. However, if $G$ is disconnected, then it does not coincide with $P_G$ since the dimension of the corresponding pseudograph associahedron  in \cite{CDF2011} is always greater than the dimension of the graph associahedron. So in this paper, we introduce a slightly modified definition of a pseudograph associahedron, see Section~\ref{sec2}, which totally agrees with the notion of a graph associahedron when $G$ is simple.
    Then, we will denote again by $P_G$ the (our modified) pseudograph associahedron of a pseudograph~$G$.
    We also remark that if $G$ does not have a loop, then there is the canonical Delzant realization of $P_G$, and, hence, it provides a real toric manifold ${M_G}$.

    For a topological space $X$, if $X$ has the homotopy type of a finite CW-complex, then we let $\PP_X(t)$ and $\widetilde{\PP}_X(t)$ be the polynomials defined by
    \[
        \PP_X(t)=\sum_{i=0}^{\infty} \beta^i(X) t^i \quad { \text{ and } } \quad \widetilde{\PP}_X(t)=\sum_{i=0}^{\infty} \tilde{\beta}^i(X) t^i ,
    \]
    where $\beta^i(X)$ is the $i$th rational Betti number of $X$ and $\tilde{\beta}^i(X)$ is the $i$th reduced rational Betti number of $X$. The polynomial $\PP_X(t)$ is called the \emph{Poincar\'{e} polynomial} of $X$.
     Our main result is about computing the Poincar\'{e} polynomial of $M_G$.

    A pseudograph $H$ is an \emph{induced subgraph} of $G$ if $H$ is a subgraph that includes all edges between every pair of nodes in $H$ if such edges of $G$ exist. A pseudograph $G'$ is a \emph{partial underlying pseudograph} of $G$ if $G'$ can be obtained from $G$ by replacing some bundles with simple edges.
    We denote by $H\lessdot G$ if $H$ is a partial underlying pseudograph of an induced subgraph of $G$.
    The main result of this paper may be stated as follows, and all definitions and notations related to Theorem~\ref{thm:main} are explained  in Section~\ref{sec3}, with more observations and examples. Throughout this paper, for simplicity, we also denote by $K$ the topological realization of a simplicial complex $K$ if there is no danger of confusion.
    \begin{theorem} \label{thm:main}
        For any pseudograph $G$, we have
        \[\PP_{M_G}(t)=1+t\sum_{H\lessdot G}\sum_{C\subset\cC_{H}\atop\text{admissible to $H$}}\widetilde{\PP}_{K_{C,H}^{\mathrm{odd}}}(t),\]
        where $\cC_H$ is the set of all nodes and all multiple edges of $H$.
    \end{theorem}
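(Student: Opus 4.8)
The plan is to start from the Suciu--Trevisan/Choi--Park formula \eqref{eq:formula}, which applies to $M_G=M_{\lambda_G}(P_G)$ since a loopless $G$ admits the canonical Delzant realization of $P_G$. Repackaging \eqref{eq:formula} into generating-function form, for a fixed $\omega\in\row(\lambda_G)$ the contribution $\sum_{i\ge 0}\rank_{\Q}\widetilde H^{i-1}((P_G)_\omega;\Q)\,t^i$ equals $t\,\widetilde{\PP}_{(P_G)_\omega}(t)$ once $\omega\neq 0$, while the single vector $\omega=0$ gives $(P_G)_\omega=\emptyset$ and hence contributes exactly the constant $1$ through $\widetilde H^{-1}(\emptyset;\Q)=\Q$. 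Summing over the row space yields
\[
\PP_{M_G}(t)=1+t\sum_{0\neq\omega\in\row(\lambda_G)}\widetilde{\PP}_{(P_G)_\omega}(t),
\]
so the theorem is equivalent to the purely combinatorial-topological identity
\[
\sum_{0\neq\omega\in\row(\lambda_G)}\widetilde{\PP}_{(P_G)_\omega}(t)=\sum_{H\lessdot G}\sum_{C\subset\cC_H\atop\text{admissible to $H$}}\widetilde{\PP}_{K_{C,H}^{\mathrm{odd}}}(t).
\]
Everything after this reduction concerns the right-hand summands.

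Next I would make the row space $\row(\lambda_G)$ explicit. Using the truncation description of $P_G$, its facets are indexed by $\B(G)\setminus\{[n+1]\}$, and $\lambda_G$ has an explicit two-case form analogous to the simple-graph characteristic function; reading off the rows of the associated $\Z_2$-matrix $\Lambda$ lets one describe a nonzero $\omega$ by its support, that is, by the set of building-set elements it selects. Mimicking the strategy of \cite{CP}, I would show that each such $\omega$ can be canonically normalized: by passing to the induced subpseudograph $H$ spanned by the relevant nodes and by recording which bundles of $G$ must be collapsed to single edges, one attaches to $\omega$ a partial underlying pseudograph $H\lessdot G$ together with a subset $C\subset\cC_H$ of nodes and multiple edges. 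The admissibility condition should be precisely the constraint cutting out those $\omega$ that actually occur in the row space, as opposed to arbitrary selections, and I would verify that $\omega\mapsto(H,C)$ is a bijection onto the admissible pairs indexing the right-hand side.

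The topological core is to identify, for the pair $(H,C)$ attached to $\omega$, the homotopy type $(P_G)_\omega\simeq K_{C,H}^{\mathrm{odd}}$. Following the simple-graph template, $(P_G)_\omega$ is a union of facets of a truncated simplex, so I would compute its homotopy type through a nerve/discrete-Morse type argument, collapsing the facets meeting $\omega$ in even cardinality and retaining the ``odd'' strata---this parity being exactly what the superscript $\mathrm{odd}$ records, namely the mod~$2$ shadow of $\lambda_G$. As in \cite{CP}, one expects each $(P_G)_\omega$ to be homotopy equivalent to a wedge of equidimensional spheres, so $\widetilde{\PP}_{(P_G)_\omega}(t)=\widetilde{\PP}_{K_{C,H}^{\mathrm{odd}}}(t)$ would follow by matching reduced Betti numbers degree by degree. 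Substituting this identification into the sum over nonzero $\omega$ and reindexing by the bijection of the previous step completes the identity, and hence the theorem.

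The main obstacle is the middle step: making precise the passage from a row vector $\omega$ to a partial underlying pseudograph $H\lessdot G$ and extracting the exact admissibility condition. In the simple case every $\omega$ normalizes to $\omega_0(H)$ for an honest induced subgraph, but bundles force the genuinely new phenomenon that distinct multiplicities collapse to the same single edge, which both enlarges the index set (the elements of $\cC_H$ now include multiple edges, not merely nodes) and imposes the admissibility restriction. Verifying that this bookkeeping really is a bijection, and simultaneously that the homotopy type of $(P_G)_\omega$ depends only on the normalized data $(H,C)$ and agrees with $K_{C,H}^{\mathrm{odd}}$, is where the substantive work lies; the reduction in the first paragraph and the generating-function formalism are essentially formal.
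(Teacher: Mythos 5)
Your skeleton does match the paper's overall strategy: start from \eqref{eq:formula}, trade the row space $\row(\lambda_G)$ for parity data on the facets of $P_G$, and identify each $(P_G)_\omega$ with a complex attached to a smaller pseudograph. But two of your middle steps are wrong as stated. First, admissibility is \emph{not} ``the constraint cutting out those $\omega$ that actually occur in the row space'': the paper identifies $\row(\lambda_G)$ with \emph{all} of $2^{\cC_G}_{\mathrm{even}}$, so every such collection $C$ does occur, and your proposed bijection from all nonzero $\omega$ onto admissible pairs $(H,C)$ cannot exist. The actual mechanism is that the sum over $C\in 2^{\cC_G}_{\mathrm{even}}$ splits into two parts: if some connected component of $\widetilde\Gamma_G(C)$ meets $C$ in odd cardinality, the associated complex is contractible and contributes nothing (Lemma~\ref{thm:ignore_odd}(i)); only the surviving collections, those in $2^{\cC_G}_{\mathrm{even}\ast}$, are in bijection with the pairs $(H,C)$ where $H=\Gamma_G(C)\lessdot G$ and $C$ is admissible to $H$ (Lemmas~\ref{lem:admissible} and~\ref{lem:gamma}). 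Your outline contains no step that disposes of the non-contributing $\omega$'s, and that step is substantive.

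Second, your justification of $(P_G)_\omega\simeq K^{\mathrm{odd}}_{C,H}$ leans on the expectation that each $(P_G)_\omega$ is a wedge of equidimensional spheres ``as in \cite{CP}'', with the Poincar\'e polynomials then matched degree by degree. That is precisely what does \emph{not} carry over to pseudographs: the shellability argument of \cite{CP} fails here (Section~\ref{sec6:remark} exhibits a pseudograph for which neither $S^{\mathrm{odd}}_{\cC_G,G}$ nor $S^{\mathrm{even}}_{\cC_G,G}$ is shellable, and whether one always obtains a wedge of spheres is left as an open question). The paper instead proves the homotopy equivalence directly, through the chain $K^{\mathrm{odd}}_{C,G}\simeq K'_{C,G}\simeq K''_{C,G}\cong K^{\mathrm{odd}}_{C,\Gamma_G(C)}$, obtained by repeatedly deleting stars of vertices with contractible links (Lemma~\ref{lem:st}) and by a join decomposition over the components of $\widetilde\Gamma_G(C)$ (Lemma~\ref{thm:ignore_odd}(ii), which is also what handles disconnected $G$ at the end). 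You would need to supply an argument of this kind; an unproven wedge decomposition is not an available shortcut.
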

    Roughly, for $C\subset\cC_{H}$, $C$ is said to be admissible to $H$ if $C$ has an even number of nodes and an even number of multiple edges of $H$ satisfying certain conditions, and $K_{C,H}^{\mathrm{odd}}$ is a subcomplex of the dual of $ \partial P_H$ satisfying certain conditions related to $C$.

    In our formula, {\tiny$\displaystyle \sum_{C\subset\cC_{H}\atop {\text{admissible to } H}}\widetilde{\PP}_{K_{C,H}^{\mathrm{odd}}}(t)$}
    is completely determined by $H\lessdot G$, 
    and so we define it by the \emph{$a$-polynomial} $a_H(t)$ of $H$.
    Interestingly,  our main theorem says that for the computation of  $\PP_{M_G}(t)$, it is sufficient to consider some subgraphs of $G$, instead of considering $P_{\omega}$ for all $\omega\in \row(\lambda_G)$.
    One of the main ideas to prove Theorem~\ref{thm:main} is to show that for $\omega \in \row(\lambda_G)$, $(P_G)_\omega$ is homotopy equivalent to $K_{C,H}^{\mathrm{odd}}$ for some collection $C$ and some $H\lessdot G$, 
    see Proposition~\ref{prop:LC}.
    Hence, not only this result reduces the computation of $\PP_{M_G}(t)$ remarkably, but enhances the understanding of the topology of suspended $M_G$ as \eqref{eq:sus}, since the dimension of $K_{C,H}^{\mathrm{odd}}$ is much less than that of $(P_G)_\omega$.

    Our result is a generalization of \cite{CP} in a sense that the $a$-polynomial of a simple graph $H$ corresponds to the $a$-number of $H$. More precisely, if a graph $G$ is simple, then an induced subgraph $H$ of $G$ having an admissible collection must have an even number of nodes and its node set is a unique admissible collection of $H$, and therefore the $a$-polynomial $a_H(t)$ is a monomial of degree $\frac{|V(H)|}{2}$ and the coefficient of the monomial is exactly equal to the $a$-number of $H$.

    This paper is organized as follows:
    in Section~\ref{sec2}, we review the slightly modified definition of a pseudograph associahedron, and show that pseudograph associahedra are Delzant. In Section~\ref{sec3}, we define the $a$-polynomial of a pseudograph, and prepare the notions necessary to state the main theorem.
    In Section~\ref{sec4}, we show that $(P_G)_\omega$ is homotopy equivalent to $K_{C,H}^{\mathrm{odd}}$ and prove the main theorem.
    We conclude in Section~\ref{sec6:remark} with some remarks.

\section{Pseudograph associahedron and its associated real toric manifold} \label{sec2}
    In this section, we briefly review the construction of the pseudograph associahedron $P_G$ for a pseudograph $G$ based on \cite{CDF2011}. However, we modify some notions in order to improve the explanation, but the essential ideas are the same.

    A \emph{pseudograph} $G$ is an ordered pair $G:=(V, E)$, where $V$ is a  set of \emph{nodes}\footnote{In this paper, we use `node' for a graph or a pseudograph, and we use `vertex' for a 0-dimensional simplex.} and $E$ is a multiset of unordered pairs of nodes, called \emph{edges}. If the endpoints of an edge $e$ are the same then $e$ is called a \emph{loop}. A pseudograph is said to be \emph{finite} if both $V$ and $E$ are finite sets. Throughout this paper, we only consider a finite pseudograph.
    An edge $e\in E$ is said to be \emph{multiple} if there exists an edge $e'$ ($\neq e$) in $E$ such that $e$ and $e'$ have the same pair of endpoints.
    An edge is called a \emph{simple edge} if it is not a multiple edge.
    A \emph{bundle} is a maximal set of multiple edges which have the same pair of endpoints. For example, the pseudograph $G$ in Figure~\ref{fig:Ex1} on page~\pageref{fig:Ex1} has five multiple edges and two bundles.

    The \emph{underlying simple graph}
    of $G$ is created by deleting all loops and replacing each bundle with a simple edge.
        A pseudograph $G$ is said to be \emph{connected} if its underlying simple graph is connected.
    A pseudograph $H$ is an \emph{induced} (respectively, \emph{semi-induced}) subgraph of $G$ if $H$ is a subgraph that includes all edges (respectively, at least one edge) between every pair of nodes in $H$ if such edges exist in $G$.
    Note that for a simple graph, semi-induced subgraphs and induced subgraphs are the same concept. For example, in Figure~\ref{fig:Ex1}, $I_1$, $I_4$, and $I_5$ are induced subgraphs and $I_3$ is a semi-induced subgraph of $G$.

    \begin{definition}
        Let $G$ be a (not necessarily connected) pseudograph. A \emph{tube} is a proper connected {semi-induced} subgraph $I$ of some connected component of $G$.\footnote{This is different from the definition of a tube in \cite{CD2006} by Carr and Devados. They defined a tube by a proper connected subgraph that includes at least one edge between every pair of nodes in $I$ if such edges  exist, and so a connected component itself can be a tube while our definition does not allow it.}
        A tube is  said to be  \emph{full} if it is an induced subgraph of $G$. Two tubes  \emph{meet by inclusion} if one properly contains the other, and they  \emph{meet by separation} if they are disjoint and cannot be connected by an edge of $G$.
        Two tubes are \emph{compatible} if they meet by inclusion or separation.
        A \emph{tubing} of $G$ is a set of pairwise compatible tubes.
    \end{definition}

    From now on, we will consider only pseudographs without loops. We give labels to the nodes and the multiple edges of a pseudograph.
    In addition, when we consider a subgraph $H$ of a pseudograph $G$, the labels of $H$ are inherited from the labels of $G$.
    Thus, if a pseudograph $H$ is considered as a subgraph of a pseudograph $G$, then $H$ might have a labelled simple edge, which is not in a bundle of $H$ (actually, it is in a bundle of $G$).
    Here are examples for tubes and tubings.

    \begin{example}\label{example:C}
        Let $G$ be a pseudograph labelled as Figure~\ref{fig:Ex1}. Then $G$ has two bundles $\{a,b\}$ and $\{c,d,e\}$.
        The subgraphs $I_1$, $I_3$, $I_4$, and $I_5$ are tubes of $G$, but $I_2$ is not a tube. In particular,
        $I_1$, $I_4$, and $I_5$ are full tubes. Note that $\{I_1,I_4,I_5\}$ is a tubing since $I_1$ and $I_4$ meet by inclusion, $I_4$ and $I_5$ meet by inclusion, and $I_1$ and $I_5$ meet by separation. But, $\{I_1, I_3, I_4\}$ is not a tubing since $I_3$ and $I_4$ are not compatible.
        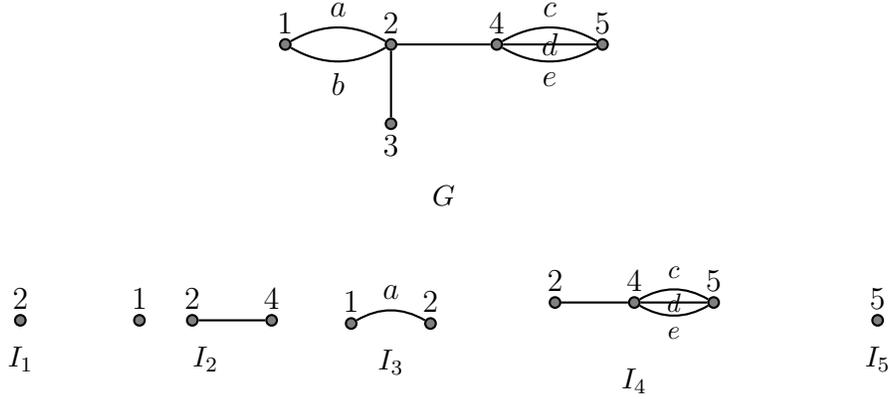
\begin{figure}[h!]
        \begin{center}
        \begin{subfigure}[c]{0.7\textwidth}
        \centering
        \begin{tikzpicture}[thick,scale=0.5]
            \node [v] (v1) {} (v1) node[above]{1};
            \node [v] (v2) [right of=v1, node distance=40pt]{} (v2) node[above]{2};
            \node [v] (v3) [below of=v2, node distance=30pt]{} (v3) node[below]{3};
            \node [v] (v4) [right of=v2, node distance=40pt]{} (v4) node[above]{4};
            \node [v] (v5) [right of=v4, node distance=40pt]{} (v5) node[above]{5};
            \path
            (v1) edge[bend left] node[above] {$a$} (v2)
                 edge[bend right] node[below] {$b$} (v2)
            (v2) edge (v3)
                 edge (v4)
            (v4) edge[bend left] node[above] {$c$} (v5)
                 edge node {$d$} (v5)
                 edge[bend right] node[below] {$e$} (v5);
        \end{tikzpicture}
        \caption*{$G$}
        \end{subfigure}
        \vspace{.5cm}
        \\
        \begin{subfigure}[c]{.1\textwidth}
        \centering
        \begin{tikzpicture}[thick,scale=0.4]
            \node [v] (v1) {} (v1) node[above]{2};
        \end{tikzpicture}
        \caption*{$I_1$}
        \end{subfigure}
        \begin{subfigure}[c]{.2\textwidth}
        \centering
        \begin{tikzpicture}[thick,scale=0.4]
            \node [v] (v1) {} (v1) node[above]{1};
            \node [v] (v2) [right of=v1, node distance=20pt]{} (v2) node[above]{2};
            \node [v] (v4) [right of=v2, node distance=30pt]{} (v4) node[above]{4};
            \path(v2) edge  (v4);
        \end{tikzpicture}
        \caption*{$I_2$}
        \end{subfigure}
        \begin{subfigure}[c]{.1\textwidth}
        \centering
        \begin{tikzpicture}[thick,scale=0.4]
            \node [v] (v1) {} (v1) node[above]{1};
            \node [v] (v2) [right of=v1, node distance=30pt]{} (v2) node[above]{2};
            \path
            (v1) edge[bend left] node[above] {$a$} (v2);
        \end{tikzpicture}
        \caption*{$I_3$}
        \end{subfigure}
        \begin{subfigure}[c]{.3\textwidth}
        \centering
        \begin{tikzpicture}[thick,scale=0.4]
            \node [v] (v2) [right of=v1, node distance=30pt]{} (v2) node[above]{2};
            \node [v] (v4) [right of=v2, node distance=30pt]{} (v4) node[above]{4};
            \node [v] (v5) [right of=v4, node distance=30pt]{} (v5) node[above]{5};
            \path
            (v2) edge (v4)
            (v4) edge[bend left] node[above] {\footnotesize $c$} (v5)
                 edge node {\footnotesize $d$} (v5)
                 edge[bend right] node[below] {\footnotesize $e$} (v5)
           ;
        \end{tikzpicture}
        \caption*{$I_4$}
        \end{subfigure}
        \begin{subfigure}[c]{.1\textwidth}
        \centering
        \begin{tikzpicture}[thick,scale=0.4]
            \node [v] (v5) [right of=v4, node distance=30pt]{} (v5) node[above]{5};
        \end{tikzpicture}
        \caption*{$I_5$}
        \end{subfigure}
        \caption{A pseudograph $G$ and its subgraphs}\label{fig:Ex1}
        \end{center}
        \end{figure}

    \end{example}

    For a pseudograph $G$, a subgraph $I$ of $G$ will be denoted by the set of nodes of $I$ and edges of $I$ in a bundle of $G$.
    For instance, for the five subgraphs of $G$ in Figure~\ref{fig:Ex1},
    \[I_1=\{2\},\quad I_2=\{1,2,4\},\quad I_3=\{1,2,a\},\quad I_4=\{2, 4, 5, c, d, e \},\quad \text{and} \quad I_5=\{5\}.\]
    It should be noted that for a semi-induced subgraph $I$, this set expression makes sense because $I$ is the subgraph of $G$ induced by the corresponding set.
    In the same sense, for a subgraph $I$ of $G$, we denote by $\alpha \in I$ if $\alpha$ is a node of $I$ or a multiple edge of $G$.
    From now on, for simplicity, we omit the braces and commas, and we always denote it in a way that the nodes proceed to the multiple edges, and the nodes are arranged in increasing order, like as $I_1=2$, $I_3=12a$, $I_4=245cde$, and $I_5=5$.

    Before we define a polyhedron associated with a pseudograph $G$, we introduce a specific labelling $L_I$ corresponding to a tube $I$ of $G$.
    For each tube $I$ of $G$, $L_I$ is defined to be the minimal collection of nodes and edges of $G$ such that
    \begin{itemize}
        \item[-] $L_I$ contains $I$ as a set, and
        \item[-] if a bundle $B$ of $G$ satisfies that $B\cap I=\emptyset$, then $B\subset L_I$.
    \end{itemize}
    For simplicity, we  also  omit the braces and commas when we denote $L_I$.
    For instance, the tubes $I_1$, $I_3$, $I_4$, and $I_5$ in Figure~\ref{fig:Ex1} have the associated labels
    \[
        L_{I_1} = 2abcde,\quad
        L_{I_3} = 12acde,\quad
        L_{I_4} = 245abcde,\quad and \quad
        L_{I_5}=5abcde.
    \]

    Now we are ready to define a pseudograph associahedron corresponding to a finite pseudograph without loops.\footnote{In \cite{CDF2011}, a pseudograph associahedron is defined for any pseudograph even if it has a loop. However, in this case, the pseudograph associahedron is not bounded. In this paper, since we are interested in only Delzant polytopes that are bounded, we shall deal with only pseudographs having no loops.} Let $G$ be a connected pseudograph with $n+1$ nodes.  Let $B_1,\ldots,B_k$ be the bundles of $G$, and $b_i+1$ the number of edges in $B_i$ for $i=1,\ldots, k$.
    Define $\Delta_G$ as the product
    $$\Delta_G:=\Delta^{n}\times\prod_{i=1}^k \Delta^{b_i}$$
    of simplices endowed with the following labels on its faces:
    \begin{enumerate}
        \item each facet of the simplex $\Delta^{n}$ is labelled with a particular node of $G$, and each face of $\Delta^{n}$ corresponds to a proper subset of nodes of $G$, defined by the intersection of the facets associated with those nodes;
        \item each vertex of the simplex $\Delta^{b_i}$ is labelled with a particular edge in $B_i$, and each face of $\Delta^{b_i}$ corresponds to a subset of $B_i$ defined by the vertices spanning the face; and
        \item these labels naturally induce a labelling  on $\Delta_G$.
    \end{enumerate}

    We note that a full tube of $G$ is determined by the element of the graphical building set $\B(G^s)$, where $G^s$ is the underlying simple graph of $G$.
    Hence, one can see that, for a connected pseudograph $G$, truncating the faces of $\Delta_G$ labelled by $L_I$ for full tubes $I$, in increasing order of dimension, constructs $$\Delta_G^\ast:=P_{G^s}\times\prod_{i=1}^{k} \Delta^{b_i}. $$
    As each face $f$ of $\Delta^*_G$ corresponding to a full tube is truncated, those subfaces of $f$ that correspond to tubes but have not yet been truncated are removed. It is natural, however, to assign these defunct tubes to the combinatorial images of their original subfaces.

    \begin{definition}[Modifying the definition in \cite{CDF2011}]\label{def:pseudograph associahedron}
        Let $G$ be a pseudograph. If $G$ is connected, then truncating the remaining faces of $\Delta_G^\ast$ labelled with $L_I$ for tubes $I$, in increasing order of dimension, results in the pseudograph associahedron $P_G$. If $G$ is a pseudograph with connected components $G^1,\ldots,G^q$, then the pseudograph associahedron is $P_G= P_{G^1}\times\cdots\times P_{G^q}$.\footnote{
    We note that, for a connected simple graph $G$, we have $P_G = Q_{G}$, where $P_{G}$ is the graph associahedron corresponding to $G$ as in \eqref{eq:Minkowski_sum}, and $Q_G$ is the polytope defined by the original definition of pseudograph associahedron in~\cite{CDF2011}. However, if $G$ is not connected, that is, $G$ has connected components $G^1, \ldots, G^q$, then $Q_G$ is defined to be $Q_G= P_{G^1}\times\cdots\times P_{G^q} \times \Delta^{q-1}$ while $P_{G} = P_{G^1}\times\cdots\times P_{G^q}$ by \eqref{eq:Minkowski_sum}. In order to avoid this confusion, we shall use a slightly modified definition of pseudograph associahedron as in Definition~\ref{def:pseudograph associahedron}.}
    \end{definition}

    \begin{example}\label{ex:associahedron}
        Let  $G$ be a pseudograph with three nodes and one bundle of size two labelled as in Figure~\ref{fig:K3}.
        Then the pseudograph $G$ has nine tubes, and five of them are full. The following table shows all the tubes $I$ of $G$ and their labellings $L_I$.

        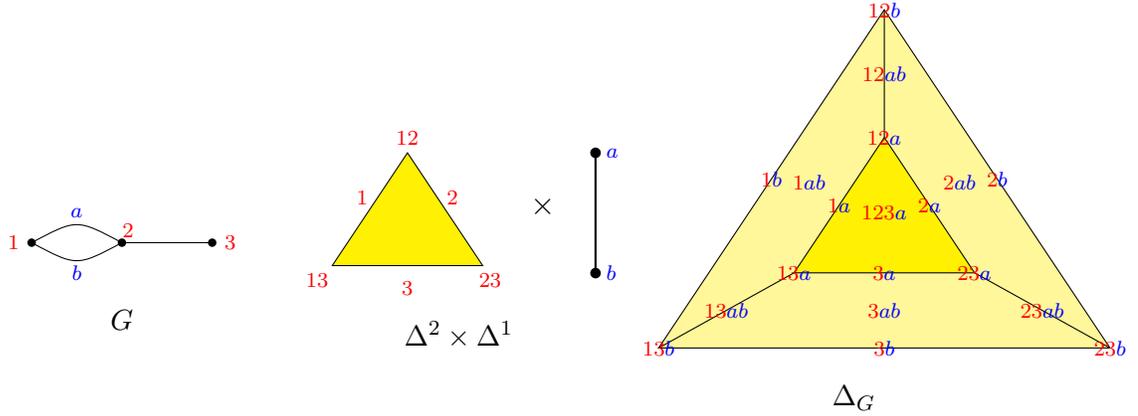
\begin{figure}[h]
        \begin{center}
        \begin{subfigure}[b]{0.28\textwidth}
        \centering
        \begin{tikzpicture}[scale=0.8]
    	\fill (3,0) circle(2pt);
    	\fill (0,0) circle(2pt);
    	\fill (1.5,0) circle(2pt);
        \draw (0,0)..controls (0.75,-0.4)..(1.5,0);	
        \draw (3,0)--(1.5,0);
    	\draw (0,0)..controls (0.75,0.4)..(1.5,0);
    	\draw (3.3,0) node{\red{\tiny{$3$}}};
    	\draw (-0.3,0) node{\red{\tiny{$1$}}};
    	\draw (1.6,0.2) node{\red{\tiny{$2$}}};
    	\draw (0.75,0.5) node{\blue{\tiny{$a$}}};
    	\draw (0.75,-0.5) node{\blue{\tiny{$b$}}};
    \end{tikzpicture}

    	\caption*{$G$}
        \vspace{1cm}
        \end{subfigure}
        \begin{subfigure}[b]{0.28\textwidth}
        \begin{tikzpicture}
    		\filldraw[fill=yellow] (0,0)--(2,0)--(1,3/2)--cycle;
    		\draw (-0.2,-0.2) node{\red{\tiny $13$}};
    		\draw (1,1.7) node{\red{\tiny $12$}};
    		\draw (2.1,-0.2) node{\red{\tiny $23$}};
    		\draw (1,-0.3) node{\red{\tiny $3$}};
    		\draw (1.6,0.9) node{\red{\tiny $2$}};
    		\draw (0.4,0.9) node{\red{\tiny $1$}};
            \draw (2.8,0.5)[above] node{{$\times$}};
    		\fill (3.5,3/2) circle(2pt);
    		\fill (3.5,-0.1) circle(2pt);
            \draw[thick] (3.5,3/2)--(3.5,-0.1);
    		\draw (3.5,3/2)[right] node{\blue{\tiny $a$}}
                (3.5,-0.1)[right] node{\blue{\tiny $b$}};
    	\end{tikzpicture}
    	\caption*{$\Delta^{2}\times\Delta^{1}$}
        \vspace{0.8cm}
    	\end{subfigure}
    	\begin{subfigure}[b]{0.38\textwidth}
    	\begin{tikzpicture}
    		\filldraw[fill=yellow!50] (0,0)--(6,0)--(3,4.5)--cycle;
    		\filldraw[fill=yellow] (1.8,1)--(4.2,1)--(3,2.8)--cycle;
            \draw (0,0) -- (1.8,1);
            \draw (6,0)--(4.2,1);
            \draw (3,4.5)--(3,2.8);
    		\draw (0,0) node  {\tiny \red{$13$}\blue{$b$}};
            \draw (6,0) node {\tiny \red{$23$}\blue{$b$}};
            \draw (3,4.5) node {\tiny \red{$12$}\blue{$b$}};
            \draw (1.8,1) node {\tiny \red{$13$}\blue{$a$}};
            \draw (4.2,1) node {\tiny \red{$23$}\blue{$a$}};
            \draw (3,2.8) node {\tiny  \red{$12$}\blue{$a$}};
            \draw (3,1.8) node {\tiny  \red{$123$}\blue{$a$}}
                (3,0.5) node {\tiny  \red{$3$}\blue{$ab$}}
                (2,2.2) node {\tiny  \red{$1$}\blue{$ab$}}
                (4,2.2) node {\tiny  \red{$2$}\blue{$ab$}};
            \draw (3,0) node {\tiny  \red{$3$}\blue{$b$}}
                (3,1) node {\tiny  \red{$3$}\blue{$a$}}
                (2.4,1.9) node {\tiny  \red{$1$}\blue{$a$}}
                (3.6,1.9) node {\tiny  \red{$2$}\blue{$a$}}
                (1.5,2.25) node {\tiny  \red{$1$}\blue{$b$}}
                (4.5,2.25) node {\tiny  \red{$2$}\blue{$b$}}
                (0.9,0.5) node {\tiny  \red{$13$}\blue{$ab$}}
                (5.1,0.5) node {\tiny  \red{$23$}\blue{$ab$}}
                (3,3.65) node {\tiny  \red{$12$}\blue{$ab$}};
    	\end{tikzpicture}
    	\caption*{$\Delta_G$}
    	\end{subfigure}
        \end{center}
        \caption{An example of $\Delta_G$ whose facets are labelled with $L_I$}\label{fig:K3}
    	\end{figure}

        \begin{center}
        \begin{tabular}{||c|c|c||c|c|c||c|c|c||}
              \hline
              $I$ & $L_I$ &  & I & $L_I$ & & I & $L_I$ & \\ \hline
              $1$ & $1ab$ & full & $23$ & $23ab$ & full & $12a$ & $12a$ & not full \\
              $2$ & $2ab$ & full & $12ab$ & $12ab$ & full & $12b$ & $12b$ & not full \\
              $3$ & $3ab$ & full &  &  & & $123a$ & $123a$ & not full\\
              & & & & && $123b$ & $123b$ & not full\\
              \hline
        \end{tabular}
        \end{center}

    First, we truncate the faces of~$\Delta_G$ corresponding to full tubes. Then we get the first polytope in Figure~\ref{fig:pseudograph associahedron} and the defunct tubes $12a$ and $12b$ correspond to some edges of the new facet obtained from the truncation of the face corresponding to $12ab$. Now we truncate the faces of $\Delta_{G}^\ast$ whose labels correspond to the remaining tubes. Then we obtain the pseudograph associahedron with nine facets labelled by $L_I$.
    See Figure~\ref{fig:pseudograph associahedron}.
        \begin{figure}[h]
        \begin{center}
    	\begin{subfigure}[b]{0.38\textwidth}
    	\begin{tikzpicture}
    		\filldraw[fill=yellow!50] (0,0)--(5.2,0)--(5.6,0.6)--(3.4,3.9)--(2.6,3.9)--cycle;
    		\filldraw[fill=yellow] (1.8,1)--(3.8,1)--(4,1.3)--(3.4,2.2)--(2.6,2.2)--cycle;
            \draw (0,0) -- (1.8,1);
            \draw (5.2,0)--(3.8,1);
            \draw (5.6,0.6)--(4,1.3);
            \draw (3.4,3.9)--(3.4,2.2);
            \draw (2.6,3.9)--(2.6,2.2);
            \draw (3,0.5) node {\tiny  \red{$3$}\blue{$ab$}}
                (2,2.2) node {\tiny  \red{$1$}\blue{$ab$}}
                (4,2.2) node {\tiny  \red{$2$}\blue{$ab$}}
                (5.1,0.5) node {\tiny  \red{$23$}\blue{$ab$}}
                (3,2.2) node {\tiny \red{$12$}\blue{$a$}}
                (3,3.9) node{\tiny \red{$12$}\blue{$b$}}
                (3,1.6) node {\tiny  \red{$123$}\blue{$a$}}
                (3,3) node {\tiny  \red{$12$}\blue{$ab$}};
    	\end{tikzpicture}
    	\caption*{$\Delta_G^\ast$}
    	\end{subfigure}
    	\begin{subfigure}[b]{0.38\textwidth}
    	\begin{tikzpicture}
    		\filldraw[fill=yellow!50] (0,0)--(5.2,0)--(5.6,0.6)--(3.6,3.6)--(2.4,3.6)--cycle;
    		\filldraw[fill=yellow] (1.8,1)--(3.8,1)--(4,1.3)--(3.5,2.05)--(2.5,2.05)--cycle;
            \filldraw[fill=yellow!80]
            (3.6,3.6)--(2.4,3.6)--(2.6,3.3)--(3.4,3.3)--cycle;
            \filldraw[fill=yellow!80]
            (3.5,2.05)--(2.5,2.05)--(2.6,2.45)--(3.4,2.45)--cycle;
            \draw (0,0) -- (1.8,1);
             \draw (5.2,0)--(3.8,1);
            \draw (5.6,0.6)--(4,1.3);
            \draw (3.4,3.3)--(3.4,2.45);
            \draw (2.6,3.3)--(2.6,2.45);
            \draw (3,3.45) node {\tiny \red{$12$}\blue{$b$}};
            \draw (3,2.25) node {\tiny  \red{$12$}\blue{$a$}};
            \draw (3,1.6) node {\tiny  \red{$123$}\blue{$a$}}
                (3,0.5) node {\tiny  \red{$3$}\blue{$ab$}}
                (2,2.2) node {\tiny  \red{$1$}\blue{$ab$}}
                (4,2.2) node {\tiny  \red{$2$}\blue{$ab$}}
                (5.1,0.5) node {\tiny  \red{$23$}\blue{$ab$}}
                (3,3) node {\tiny  \red{$12$}\blue{$ab$}};
    	\end{tikzpicture}
    	\caption*{$P_G$}
    	\end{subfigure}
    	\begin{subfigure}[b]{0.2\textwidth}
    	\begin{tikzpicture}[scale=.31]
    		\pgfsetfillopacity{0.8}
    		\draw (-1,0)--(3,3);
    		\draw (3,3)--(10,3);
    		\draw (3,3)--(5.25,7.75);
    		\draw (5,8.2)--(5.25,7.75)--(8.75,7.75);
    		\filldraw[fill=yellow] (-1,0)--(6,0)--(7,1)--(4.8,5)--(1.2,5)--cycle;
    		\filldraw[fill=yellow] (1.2,5)--(4.8,5)--(5,6.8)--(3,6.8)--cycle;
    		\filldraw[fill=yellow] (7,1)--(6,0)--(10,3)--(11,4)--cycle;
    		\filldraw[fill=yellow!50] (3,6.8)--(5,8.2)--(7,8.2)--(5,6.8)--cycle;
    		\filldraw[fill=yellow!50] (7,8.2)--(8.75,7.75)--(11,4)--(7,1)--(4.8,5)--(5,6.8)--cycle;
    	\end{tikzpicture}
    	\caption*{$P_G$}
    	\end{subfigure}
        \end{center}\caption{The pseudograph associahedron $P_G$}\label{fig:pseudograph associahedron}
        \end{figure}
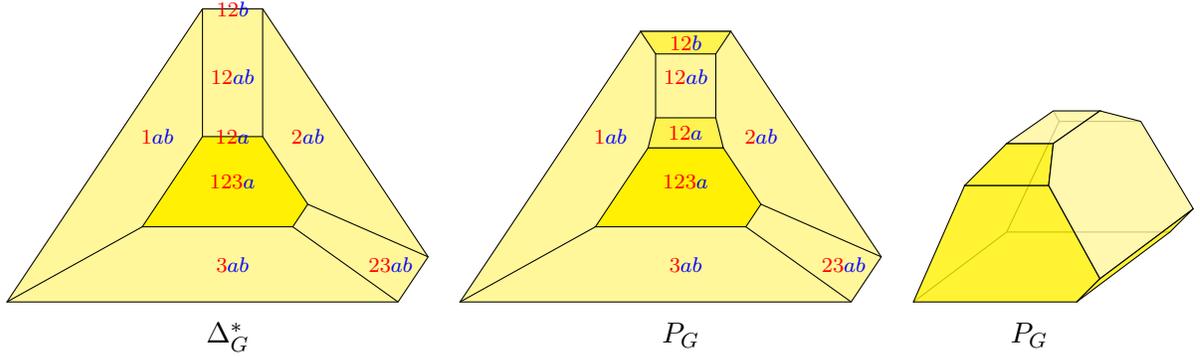
    \end{example}

    For simplicity, we will use the same notation for tubes and facets, that is, we denote by $I$ the facet labelled by $L_I$.
    The face poset of $P_G$ is isomorphic to the set of tubings of $G$, ordered under reverse subset containment.
    Two facets \emph{meet by inclusion}  (respectively, \emph{meet by separation})  if their corresponding tubes meet by inclusion (respectively, meet by separation).
    In addition, the set of tubes of $G$ also represents the set of facets of $P_G$.
    An $n$-dimensional convex polytope $P$ is called a \emph{Delzant polytope} if the outward normal vectors to the facets meeting at each vertex form an integral basis of $\Z^n$. In the rest of this section, we shall discuss the Delzant construction of $P_G$.

    \begin{lemma}\label{lem:truncation and delzant}
        Let $P$ be a Delzant polytope and $F$ a proper face of $P$. Then there is the canonical truncation of $P$ along $F$ such that the result is a Delzant polytope.
    \end{lemma}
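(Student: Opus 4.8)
The plan is to realise the truncation explicitly as the intersection of $P$ with a single additional half-space, whose outward normal is the sum of the normals of the facets through $F$, and then to check the Delzant condition at every vertex of the resulting polytope.

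First I would write $P=\{x\in\R^n : \langle u_i,x\rangle\le c_i,\ i=1,\dots,m\}$, where each $u_i$ is the primitive integral outward normal to the facet $F_i$. Since $P$ is simple and $F$ is a proper face, $F$ is the intersection of exactly $k=\mathrm{codim}\,F$ facets; after relabelling write $F=F_1\cap\cdots\cap F_k$. Put $u_F:=u_1+\cdots+u_k$. The linear function $x\mapsto\langle u_F,x\rangle$ attains its maximum $\sum_{i=1}^k c_i$ over $P$ precisely on $F$, since $\langle u_F,x\rangle\le\sum_{i=1}^k c_i$ with equality exactly when $\langle u_i,x\rangle=c_i$ for all $i\le k$. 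Hence for $c$ slightly smaller than this value the hyperplane $H_F=\{\langle u_F,x\rangle=c\}$ separates $F$ from every vertex of $P$ not lying in $F$, and I would define the canonical truncation to be $P':=P\cap\{x : \langle u_F,x\rangle\le c\}$ for such a $c$. Choosing $c$ in this range guarantees that the only vertices destroyed are those in $F$ and that the new facet $P'\cap H_F$ meets precisely $F_1,\dots,F_k$ and the facets meeting $F$, so $P'$ has the expected combinatorial type.

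The verification then splits according to the two kinds of vertices of $P'$. Every vertex of $P$ lying outside the slab between $F$ and $H_F$ survives with an unchanged neighborhood, so the Delzant condition holds there automatically. The substance is in the new vertices, all of which lie on $H_F$. Fix a vertex $v\in F$; because $P$ is simple, exactly $n$ facets pass through $v$ and they include $F_1,\dots,F_k$, so by the Delzant hypothesis their primitive normals $u_1,\dots,u_n$ form an integral basis of $\Z^n$. Passing to the dual coordinates in which $u_i=e_i$, the face $F$ becomes $\{y_1=\cdots=y_k=0\}$ and $u_F$ becomes $e_1+\cdots+e_k$, which is primitive; thus $u_F$ is a primitive integral vector and the cut produces a genuine new facet. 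The new vertices near $v$ are $v+\epsilon e_i$ for $i=1,\dots,k$, and at $v+\epsilon e_i$ the incident normals are $\{u_j : j\neq i\}\cup\{u_F\}$. In the dual coordinates this set is $\{e_j : j\neq i\}\cup\{e_1+\cdots+e_k\}$, and since $i\le k$ the vector $e_1+\cdots+e_k$ is congruent to $e_i$ modulo $\mathrm{span}\{e_j : j\neq i\}$, so this family is again an integral basis of $\Z^n$.

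I expect the main obstacle to be exactly this last computation: one must use that the facets containing $F$ have normals extending to a common integral basis (which follows from the Delzant condition at any vertex of $F$) and then confirm that swapping one such normal for their sum $u_F$ preserves the basis property. Once this is in place, combined with the fact that every new vertex of $P'$ has the form $v+\epsilon e_i$ for some vertex $v\in F$ (reflecting that the new facet is combinatorially $F\times\Delta^{k-1}$), the Delzant condition holds at all vertices of $P'$, which proves the lemma. The remaining points—that $P'$ is again simple, and that the result is independent of $c$ in the admissible range up to combinatorial and lattice equivalence—are routine and I would only remark on them.
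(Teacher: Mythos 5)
Your proposal is correct and follows essentially the same route as the paper: both truncate along $F$ by a hyperplane with outward normal $\lambda(F_1)+\cdots+\lambda(F_k)$ and verify the Delzant condition at each new vertex by noting that replacing one normal $u_i$ ($i\le k$) in a unimodular basis by the sum $u_1+\cdots+u_k$ preserves unimodularity. Your version merely adds the explicit half-space description and the choice of the cutting parameter, which the paper leaves implicit.
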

    \begin{proof}
        Assume $\dim P=n$. Suppose that $F$ is the intersection of facets $F_1,\ldots,F_k$ for $1\leq k \leq n$ whose outward normal vectors are $\lambda(F_1),\ldots,\lambda(F_k)$, respectively. We truncate $P$ along $F$ by a hyperplane whose normal vector is $\lambda(F_1) + \cdots + \lambda(F_k)$, and obtain a convex simple polytope $\cut_F(P)$. We claim that $\cut_F(P)$ is Delzant. In order to prove the claim, by the convexity of $\cut_F(P)$, the only thing what we have to show is that for every new vertex $v$ of $\cut_F(P)$, the normal vectors of facets containing $v$ are unimodular.
        If $k=1$, then $\cut_F(P)$ is clearly a Delzant polytope.
        Now assume that $k>1$. Then at each vertex $v$ of $F$, there exist facets $F_{k+1}^v,\ldots,F_n^v$ such that the outward normal vectors $\lambda(F_1)$, $\ldots$, $\lambda(F_k)$, $\lambda(F_{k+1}^v)$, $\ldots$, $\lambda(F_n^v)$ form an integral basis of $\Z^n$. Let $\tilde{F}$ be the facet of $\cut_F(P)$ which is the new facet arising from the truncation of the face $F$. Note that the facets $F_1,\cdots,F_k$ and $F_{k+1}^v,\ldots,F_n^v$ are also cut when we truncate the face $F$ of $P$. Let $\tilde{F_1}$,\ldots,$\tilde{F_k}$, $\tilde F_{k+1}^v$,\ldots,$\tilde F_n^v$ be the facets of $\cut_F(P)$ which are the cuts of $F_1$, $\ldots$, $F_k$, $F_{k+1}^v$,\ldots, $F_n^v$, respectively.  We note that each new vertex of $\tilde{F}$ can be written as the intersection of $\tilde{F}_1$, $\ldots$, $\tilde{F}_{i-1}$, $\tilde{F}$, $\tilde{F}_{i+1}$, $\ldots$, $\tilde{F}_k$, $\tilde{F}_{k+1}^v$, $\ldots$, $\tilde{F}_n^v$, where $v$ is a vertex of $F$. Furthermore, in this case, $\{F_1, \ldots,F_k,F_{k+1}^v,\ldots, F_n^v \}$ should meet at the vertex $v$ in $P$. Since $P$ is Delzant, their normal vectors are unimodular. Therefore, we have
        \begin{equation*}
        \begin{split}
        &\det \begin{pmatrix} \lambda(F_1) & \cdots&\lambda(F_k)&\lambda(F_{k+1}^v\cdots & \lambda(F_n^v)
        \end{pmatrix}\\
        & = \det \begin{pmatrix} \lambda(\tilde{F}_1) & \cdots & \lambda(\tilde{F}_{i-1}) & \lambda(\tilde{F}) & \lambda(\tilde{F}_{i+1}) & \cdots &\lambda(\tilde{F}_k)&\lambda(\tilde{F}_{k+1}^v)&\cdots& \lambda(\tilde{F}_n^v)
        \end{pmatrix},
        \end{split}
        \end{equation*}
        and hence $\cut_F(P)$ is also Delzant. %
    \end{proof}

    Let $G$ be a connected pseudograph with node set $V=[n+1]$ and bundles $B_1, \ldots, B_k$, and let $|B_i|=b_i+1$ for $i=1,\ldots,k$.
    Each edge in a bundle $B_i$ is labelled by $e_i^j$ for $i\in[k]$ and $j \in [b_i+1]$. We set
    $\cC_G = V\cup B_1 \cup \cdots \cup B_k$ and $\cR_G = \cC_G \setminus \{ n+1, e_1^{b_1+1}, \ldots, e_k^{b_k+1}\}$.
    We note that $\Delta_G$ is $|\cR_G|$-dimensional.
    Consider an integral matrix $A =(a_{\alpha,\beta})$ of size $|\cR_G| \times |\cC_G|$ whose rows are labelled by elements in $\cR_G$ and columns are labelled by elements in $\cC_G$.
    Then, there is a Delzant realization of $\Delta_G$ such that the normal vector of the facet corresponding to $\beta \in \cC_G$ is the column $A_\beta$ of $A$ labelled by $\beta$ if we put
    $$
    a_{\alpha,\beta} = \left\{
                            \begin{array}{ll}
                              -1, & \hbox{if $\alpha = \beta$;} \\
                              1, & \hbox{if $\alpha \in V$ and $\beta = n+1$, or $\alpha \in B_i$ and $\beta= e_i^{b_i+1}$;} \\
                              0, & \hbox{otherwise.}
                            \end{array}
                        \right.
    $$
    Since $P_G$ is obtained from $\Delta_G$ by truncating facets canonically, it follows from Lemma~\ref{lem:truncation and delzant} that for a pseudograph $G$,  the pseudograph associahedron $P_G$ becomes a Delzant polytope.
    More precisely, the outward normal vector of the facet corresponding to a tube~$I$ is $\sum_{\beta \in L_I} A_\beta = \sum_{\beta \in I} A_\beta$.
    Let $\lambda_G$ be the mod $2$ characteristic function of $P_G$. Then, $\lambda_G(I) \equiv \sum_{\beta \in I} A_\beta \pmod{2}$.
    For simplicity, let $M_G:=M_{\lambda_G}(P_G)$, which is a real toric manifold associated with $\lambda_G$.

\section{The $a$-polynomial of a pseudograph and the main result} \label{sec3}
    In this section, we will define the $a$-polynomial of a pseudograph
    and explain our main result.

    Let $G$ be a connected pseudograph with node set $V$ and bundles $B_1,\ldots,B_k$, and $\cC_G = V \cup B_1 \cup \cdots \cup B_k$.
    We set
    $$2^{\cC_G}_{\mathrm{even}} = \{ C \subset \cC_G \mid |C\cap V|\equiv|C\cap B_1|\equiv\cdots\equiv|C\cap B_k|\equiv 0 \text{ (mod $2$)}\} \subset 2^{\cC_G}.$$
    A collection $C\subset \cC_G$ is said to be \emph{admissible} to $G$ if it satisfies the following (a1)$\sim$(a3):
    \begin{itemize}
      \item[(a1)] $C\in 2^{\cC_G}_{\mathrm{even}}$,
      \item[(a2)] $C$ contains the nodes which are not endpoints of any bundle of $G$, and
      \item[(a3)] for each bundle $B$ of $G$,  $B\cap C\neq \emptyset$.
    \end{itemize}
    If $G$ has $q$ connected components $G^1$, \ldots, $G^q$, then $C\subset \cC_G$ is said to be \emph{admissible} to~$G$ if $C\cap G^i$ is admissible to $G^i$ for every $i=1,\ldots,q$. If $C\subset \cC_G$ is admissible to $G$, then $C$ is called an \emph{admissible collection} of $G$.
    For example, Figure~\ref{fig:admissible} shows all admissible collections of the pseudograph~$G$ in Figure~\ref{fig:K3} and Figure~\ref{fig:nonadmissible} shows some non-admissible collections.

    \begin{figure}[h]
    \begin{center}
    \begin{subfigure}[b]{0.2\textwidth}
	\centering
    \begin{tikzpicture}[scale=0.9]
    	\fill (3,0) circle(2pt);
    	\fill (0,0) circle(2pt);
    	\fill (1.5,0) circle(2pt);
        \draw (0,0)..controls (0.75,-0.4)..(1.5,0);	
        \draw (3,0)--(1.5,0);
    	\draw (0,0)..controls (0.75,0.4)..(1.5,0);
    	\draw (3.3,0) node{\footnotesize$3$};
    	\draw (-0.3,0) node{ $$};
    	\draw (1.6,0.2) node{\footnotesize$2$};
    	\draw (0.75,0.5) node{\footnotesize$a$};
    	\draw (0.75,-0.5) node{\footnotesize$b$};
    \end{tikzpicture}\caption*{$C_1=23ab$}
    \end{subfigure}\hspace{2cm}
    \begin{subfigure}[b]{0.2\textwidth}
    \centering
     \begin{tikzpicture}[scale=0.9]
    	\fill (3,0) circle(2pt);
    	\fill (0,0) circle(2pt);
    	\fill (1.5,0) circle(2pt);
        \draw (0,0)..controls (0.75,-0.4)..(1.5,0);	
        \draw (3,0)--(1.5,0);
    	\draw (0,0)..controls (0.75,0.4)..(1.5,0);
    	\draw (3.3,0) node{\footnotesize$3$};
    	\draw (-0.3,0) node{\footnotesize$1$};
    	\draw (1.6,0.2) node{ };
    	\draw (0.75,0.5) node{\footnotesize$a$};
    	\draw (0.75,-0.5) node{\footnotesize$b$};
    \end{tikzpicture}\caption*{$C_2=13ab$}
    \end{subfigure}
    \caption{Admissible collections}\label{fig:admissible}
    \end{center}
    \end{figure}
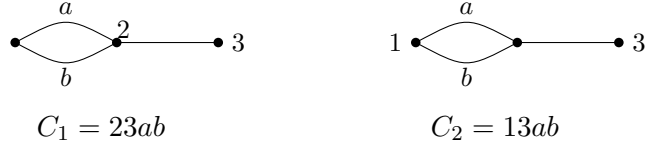

    \begin{figure}[h]
    \begin{center}
    \begin{subfigure}[b]{0.2\textwidth}
    \centering
    \begin{tikzpicture}[scale=0.9]
    	\fill (3,0) circle(2pt);
    	\fill (0,0) circle(2pt);
    	\fill (1.5,0) circle(2pt);
        \draw (0,0)..controls (0.75,-0.4)..(1.5,0);	
        \draw (3,0)--(1.5,0);
    	\draw (0,0)..controls (0.75,0.4)..(1.5,0);
    	\draw (3.3,0) node{\footnotesize$3$};
    	\draw (-0.3,0) node{\footnotesize$1$};
    	\draw (1.6,0.2) node{\footnotesize$2$};
    	\draw (0.75,0.5) node{\footnotesize$a$};
    	\draw (0.75,-0.5) node{\footnotesize$b$};
    \end{tikzpicture}
    \caption*{$C_3=123ab$}
    \end{subfigure}\hspace{0.5cm}
    \begin{subfigure}[b]{0.2\textwidth}
    \centering
    \begin{tikzpicture}[scale=0.9]
    	\fill (3,0) circle(2pt);
    	\fill (0,0) circle(2pt);
    	\fill (1.5,0) circle(2pt);
        \draw (0,0)..controls (0.75,-0.4)..(1.5,0);	
        \draw (3,0)--(1.5,0);
    	\draw (0,0)..controls (0.75,0.4)..(1.5,0);
    	\draw (3.3,0) node{\footnotesize$3$};
    	\draw (-0.3,0) node{};
    	\draw (1.6,0.2) node{};
    	\draw (0.75,0.5) node{\footnotesize$a$};
    	\draw (0.75,-0.5) node{\footnotesize$b$};
    \end{tikzpicture}
    \caption*{$C_4=3ab$}
    \end{subfigure}\hspace{0.5cm}
    \begin{subfigure}[b]{0.2\textwidth}
    \centering
     \begin{tikzpicture}[scale=0.9]
    	\fill (3,0) circle(2pt);
    	\fill (0,0) circle(2pt);
    	\fill (1.5,0) circle(2pt);
        \draw (0,0)..controls (0.75,-0.4)..(1.5,0);	
        \draw (3,0)--(1.5,0);
    	\draw (3.3,0) node{\footnotesize$3$};
    	\draw (-0.3,0) node{ };
    	\draw (1.6,0.2) node{\footnotesize$2$};
    	\draw (0.75,0.5) node{ };
    	\draw (0.75,-0.5) node{\footnotesize$b$};
    \end{tikzpicture}
    \caption*{$C_5=23b$}
    \end{subfigure}
    \caption{Non-admissible collections}\label{fig:nonadmissible}
    \end{center}
    \end{figure}
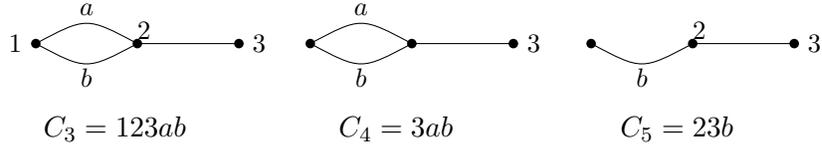

    For a pseudograph $G$ and for each  collection $C\subset \cC_G$, let $P_{C,G}^{\mathrm{odd}}$ denote the union of all facets $I$ of $P_G$ such that $|I\cap C|$ is odd,  and $K_{C,G}^{\mathrm{odd}}$ its dual simplicial complex.

    For a topological space $X$, if $X$ has the homotopy type of a finite CW-complex, then we let $\PP_X(t)$ and $\widetilde{\PP}_X(t)$ be the polynomials defined by
    \[
        \PP_X(t)=\sum_{i=0}^{\infty} \beta^i(X) t^i \quad { \text{ and } } \quad \widetilde{\PP}_X(t)=\sum_{i=0}^{\infty} \tilde{\beta}^i(X) t^i, \text{ respectively},
    \] where $\beta^i(X)=\mathrm{rank}_{\mathbb{Q}} {H}^{i}(X; \Q)$ and $\tilde\beta^i(X)=\mathrm{rank}_{\mathbb{Q}} \widetilde{H}^{i}(X; \Q)$.
    Then we define the \emph{$a$-polynomial} $a_G(t)$ of $G$ by
    \begin{equation}
        a_G(t)=\sum_{C\subset\cC_{G}\atop\text{admissible to }G}\widetilde{\PP}_{K_{C,G}^{\mathrm{odd}}}(t).\label{def:a-poly}
    \end{equation}
    Note that if $G$ has no admissible collection then the $a$-polynomial $a_G(t)$ of $G$ is defined to be the zero polynomial. Hence, if $G$ is a simple graph with odd number of nodes, then $a_G(t)$ is the zero polynomial.
    For example, recall that the pseudograph $G$ in Figure~\ref{fig:K3} has only two admissible collections $C_1$ and $C_2$ in Figure~\ref{fig:admissible}.
    Figure~\ref{fig:a-poly2} says that $K_{C_1,G}^{\mathrm{odd}}$ is homotopy equivalent to $S^1$, and $K_{C_2,G}^{\mathrm{odd}}$ is null-homotopic.
    Thus $\widetilde{\PP}_{K_{C_1,G}^{\mathrm{odd}}}(t)=t$ and $\widetilde{\PP}_{K_{C_2,G}^{\mathrm{odd}}}(t)=0$, and hence the $a$-polynomial of $G$ is $a_G(t)=t$.\label{page:a-poly}

    \begin{figure}[h]
    \centering
    \begin{subfigure}[b]{0.4\textwidth}
    \begin{tikzpicture}
         \path (0,0) coordinate (2)    (2,0) coordinate (3)    (-1.5,0) coordinate (12ab)  (1,1) coordinate (123a)   (1,-1) coordinate (123b);
         \path (2) edge (12ab);
         \path (3) edge (123a);
         \path (3) edge (123b);
         \path (2) edge (123b);
         \path (2) edge (123a);
         \fill (2) node[above]{\tiny$2$}
               (3) node[above]{\tiny$3$}
               (123a) node[above]{\tiny $123a$}
               (123b) node[below]{\tiny $123b$}
               (12ab) node[left]{\tiny $12ab$};
         \fill (2) circle (2pt) (3) circle (2pt)   (12ab) circle (2pt)(123b) circle (2pt) (123a) circle (2pt);
    \end{tikzpicture}
    \caption*{\footnotesize$K^{\mathrm{odd}}_{C_1,G}$}
    \end{subfigure}
    \begin{subfigure}[b]{0.4\textwidth}
     \begin{tikzpicture}
         \path (0,0) coordinate (2)    (2,0) coordinate (3)    (-1.5,0) coordinate (12ab)  (1,1) coordinate (123a)   (1,-1) coordinate (123b);    \draw[fill=red!20]  (2)--(123a)--(3)--(123b)--cycle;
         \path (2) edge (12ab);
         \path (3) edge (123a);
         \path (3) edge (123b);
         \path (2) edge (123b);
         \path (2) edge (123a);
         \path (2) edge (3);
         \fill (2) node[above]{\tiny$1$}
               (3) node[above]{\tiny$3$}
               (123a) node[above]{\tiny $123a$}
               (123b) node[below]{\tiny $123b$}
               (12ab) node[left]{\tiny $12ab$};
         \fill (2) circle (2pt) (3) circle (2pt)   (12ab) circle (2pt)(123b) circle (2pt) (123a) circle (2pt);
    \end{tikzpicture}
    \caption*{\footnotesize$K^{\mathrm{odd}}_{C_2,G}$}
    \end{subfigure}
    \caption{Simplicial complexes corresponding to admissible collections}\label{fig:a-poly2}
    \end{figure}
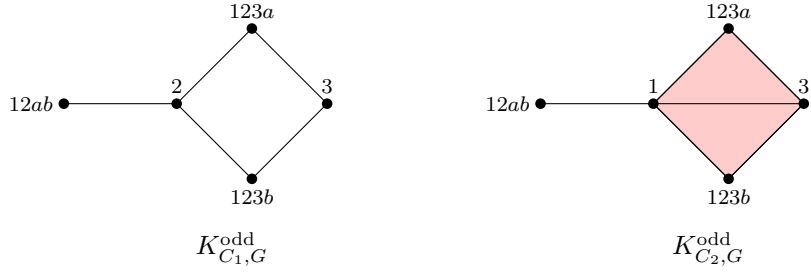

    Note that for a pseudograph $G$, by \eqref{eq:formula}, we already know that the rational Betti numbers of the real toric manifold $M_G$ corresponding to $G$ are computed as
    $$\PP_{M_G}(t)=1+t\sum_{\omega \in \row(\lambda_G)}\widetilde{\PP}_{(P_G)_\omega}(t),$$
    where $\lambda_G$ is the mod $2$ characteristic function of $M_G$ defined at the end of Section~\ref{sec2}.

    We define a matrix $\Lambda' = (\lambda'_{\alpha, I})$ over  $\mathbb{Z}_2$ whose rows are labelled by elements in $\cC_G$ and  columns are labelled by the tubes of $G$ such that
    \[
        \lambda'_{\alpha, I}=\left\{ \begin{array}{ll} 1 &\text{if } \alpha \in I \\
                                               0 & \text{otherwise. }\end{array}  \right.
    \]
    Let $G$ be a connected pseudograph with node set $V=[n+1]$ and bundles $B_1,\ldots,B_k$, where $|B_i|=b_i+1$ for each $i=1,\ldots,k$.
    Then $\lambda_G$ can be obtained from $\Lambda'$ by adding the row $\Lambda'_{n+1}$ to the rows $\Lambda'_{1},\ldots,\Lambda'_{n}$, adding the row $\Lambda'_{e^{b_i+1}_i}$ to the rows $\Lambda'_{e^1_i},\ldots,\Lambda'_{e^{b_i}_i}$ for $i=1,\ldots,k$, and then deleting the rows $\Lambda'_{n+1}$, $\Lambda'_{e^{b_1+1}_1},\ldots,\Lambda'_{e^{b_k+1}_k}$ from $\Lambda'$, where $\Lambda'_{\alpha}$ is the row vector of $\Lambda'$ corresponding to $\alpha \in \mathcal{C}_G$.
    Each subset $C$ of $\cC_G$ is assigned to an element $ \omega_C:=\sum_{c\in C}\Lambda'_{c}$ of $\row(\Lambda')$. Define
    \begin{equation*}
        \row(\Lambda'_{\mathrm{even}}) := \{ \omega_C \in \row(\Lambda') \mid C \in 2^{\cC_G}_{\mathrm{even}}\}.
    \end{equation*}
    Then, $\row(\Lambda'_{\mathrm{even}})$ is a subspace of $\row(\Lambda')$.

    Let $\cR_G = \cC_G \setminus \{ n+1, e_1^{b_1+1}, \ldots, e_k^{b_k+1}\}$.
    For $R \subset \cR_G$, there is a bijection from $2^{\cR_G}$ to $2^{\cC_G}_{\mathrm{even}}$ defined by $R \mapsto C$, where
    \[
        C\cap V=\left\{ \begin{array}{ll} R\cap V &\text{if } |R\cap V| \text{ is even;} \\
        (R\cap V)\cup\{n+1\} & \text{if }|R\cap V| \text{ is odd}, \end{array}  \right.
    \]
    and for  $i=1, \ldots, k$,
    \[
        C\cap B_i=\left\{ \begin{array}{ll} R\cap B_i &\text{if } |R\cap B_i| \text{ is even;} \\
        (R\cap B_i)\cup\{ e_i^{b_i+1}  \} &  \text{if }|R\cap B_i| \text{ is odd}. \end{array}  \right.
    \]

    Note that each element of $\row(\lambda_G)$ is associated with a subset $R$ in $\cR$, and this bijection identifies $\row(\lambda_G)$ with $\row(\Lambda'_{\mathrm{even}})$. Furthermore, one can see that $(P_G)_{ \omega_C}$  is the union of all facets $I$ such that $|I \cap C|$ is odd.
    Hence, by \eqref{eq:formula}, the $i$th rational Betti number $\beta^i(M_G)$ of $M_G$ is given by
    \begin{equation}
        \beta^i(M_G)=\sum_{C \in 2^{\cC_G}_{\mathrm{even}} } \tilde{\beta}^{i-1}(P^{\mathrm{odd}}_{C,G}). \label{lem:P'T}
    \end{equation}
    Hence, for a pseudograph $G$, $$\PP_{M_G}(t)=1+t\sum_{C\in 2^{\cC_G}_{\mathrm{even}}}\widetilde{\PP}_{K_{C,G}^{\mathrm{odd}}}(t).$$

    Recall that a pseudograph $G'$ is a partial underlying pseudograph of $G$ if $G'$ can be obtained from $G$ by replacing some bundles with simple edges, that is, the set of all the bundles of $G'$ is a subset of that of $G$. We denote by $H\lessdot G$ if $H$ is a partial underlying pseudograph of an induced subgraph of $G$.
    See Figure~\ref{fig:semi} for illustrations.
    We now restate the main theorem. The proof will be presented in Section~\ref{sec4}.

\bigskip

    \noindent\textbf{Theorem 1.1}
    \textit{       For any pseudograph $G$,  we have}
        \[\PP_{M_G}(t)=1+t\sum_{H\lessdot G} a_H(t)=1+t\sum_{H\lessdot G}\sum_{C\subset\cC_{H}\atop\text{admissible to $H$}}\widetilde{\PP}_{K_{C,H}^{\mathrm{odd}}}(t).\]
\medskip

    One should note that if $H\lessdot G$, then we have $\cC_H\subseteq \cC_G$ and the dimension of $K_{C,H}^{\mathrm{odd}}$ is much less than the dimension of $K_{C,G}^{\mathrm{odd}}$ for each collection $C\subset\cC_H$. We will see in the next section that for each collection $C$ admissible to $H$, $K_{C,H}^{\mathrm{odd}}$ is a simplicial subcomplex of $K_{C,G}^{\mathrm{odd}}$, and $K_{C,H}^{\mathrm{odd}}$ and $K_{C,G}^{\mathrm{odd}}$ have the same homotopy type.

    Here is an example. Let us compute $\PP_{M_G}(t)$ for the pseudograph $G$ in Figure~\ref{fig:K3}.
    Figure~\ref{fig:semi} shows all pseudographs $H\lessdot G$, 
    where only four pseudographs can
    have an admissible collection, that is, $H_1=123ab$, $H_2=12ab$, $H_3=23$, and $H_4=12$.
    We computed that $a_{H_1}(t)=t$ right after the definition of the $a$-polynomial. 
    The admissible collections of $H_2$ are $12ab$ and $ab$, where $K_{12ab,H_2}^{\mathrm{odd}}$ and $K_{ab,H_2}^{\mathrm{odd}}$ are homotopy equivalent to $S^1$ and $S^0$, respectively. Thus $a_{H_2}(t)=1+t$.
    Since $23$ is the only admissible collection of $H_3$ and $K_{23,H_3}^{\mathrm{odd}}$ is homotopy equivalent to $S^0$, we have $a_{H_3}(t)=1$.
    In addition, $12$ is the only admissible collection of $H_4$ and $K_{12,H_4}^{\mathrm{odd}}$ is homotopy equivalent to $S^0$. Hence, $a_{H_4}(t)=1$.
    Thus $\PP_{M_G}(t)=1+t(t+(1+t)+2)=1+3t+2t^2$.
    \begin{figure}[h]
    \begin{center}
    \begin{subfigure}[b]{0.3\textwidth}
\hspace{0.8cm}
    \begin{tikzpicture}[scale=0.8]
    	\fill (3,0) circle(2pt);
    	\fill (0,0) circle(2pt);
    	\fill (1.5,0) circle(2pt);
        \draw (0,0)..controls (0.75,-0.4)..(1.5,0);	
        \draw (3,0)--(1.5,0);
    	\draw (0,0)..controls (0.75,0.4)..(1.5,0);
    	\draw (3.3,0) node{\footnotesize$3$};
    	\draw (-0.3,0) node{\footnotesize$1$};
    	\draw (1.7,0.2) node{\footnotesize$2$};
    	\draw (0.75,0.5) node{\footnotesize$a$};
    	\draw (0.75,-0.5) node{\footnotesize$b$};
    \end{tikzpicture}
    \end{subfigure}
    \hspace{0.3cm}
    \begin{subfigure}[b]{0.3\textwidth}

    \vspace{.2cm}
    \end{subfigure}
    \begin{subfigure}[b]{0.2\textwidth}
	 \begin{tikzpicture}[scale=0.8]
    	\fill (0,0) circle(2pt);
    	\fill (1.5,0) circle(2pt);
      	\fill (3,0) circle(0pt);
        \draw (0,0)..controls (0.75,-0.4)..(1.5,0);		
        \draw (0,0)..controls (0.75,0.4)..(1.5,0);
    	\draw (-0.3,0) node{\footnotesize$1$};
    	\draw (1.7,0.2) node{\footnotesize$2$};
    	\draw (0.75,0.5) node{\footnotesize$a$};
    	\draw (0.75,-0.5) node{\footnotesize$b$};
    \end{tikzpicture}
    \end{subfigure}
      \begin{subfigure}[b]{0.2\textwidth}
        \begin{tikzpicture}[scale=0.8]
        \fill (0,0) circle(2pt);
		\fill (1.5,0) circle(2pt);
		\draw (1.5,0)--(0,0)--cycle;
		\draw (-0.2,-0.3) node{{\tiny$2$}};
      	\draw (1.6,-0.3) node{{\tiny$3$}};
	\end{tikzpicture}
    \end{subfigure}
    \begin{subfigure}[b]{.2\textwidth}
    \begin{tikzpicture}[scale=.8]
        \fill (0,0) circle(2pt);
        \fill (1.5,0) circle(2pt);
        \draw (-0.2,-0.3) node{\tiny{$1$}};
        \draw (1.7,-0.3) node{\tiny{$3$}};
    \end{tikzpicture}
    \end{subfigure}\\ \vspace{0.2cm}
    \begin{subfigure}[b]{0.9\textwidth}
    \begin{tikzpicture}[scale=0.8]
 	    \fill (-3,0) circle(2pt);
    	\fill (-6,0) circle(2pt);
    	\fill (-4.5,0) circle(2pt);
        \draw (-6,0)--(-4.5,0);	
        \draw (-6,0)--(-3,0);
    	\draw (-2.7,0) node{\footnotesize$3$};
    	\draw (-6.3,0) node{\footnotesize$1$};
    	\draw (-4.3,-0.2) node{\footnotesize$2$};
    	\fill (0,0) circle(2pt);
    	\fill (1.5,0) circle(2pt);
        \draw (0,0)--(1.5,0);	
    	\draw (-0.3,0) node{\footnotesize$1$};
    	\draw (1.8,0.0) node{\footnotesize$2$};
		\fill (4.5,0) circle(2pt);
		\draw (4.4,-0.3) node{{\tiny$1$}};
		\fill (7.5,0) circle(2pt);
		\draw (7.4,-0.3) node{{\tiny$2$}};
		\fill (10.5,0) circle(2pt);
		\draw (10.4,-0.3) node{{\tiny$3$}};
	\end{tikzpicture}
    \end{subfigure}
    \vspace{.5cm}
    \caption{All pseudographs $H\lessdot G$ for the pseudograph $G$ in Figure~\ref{fig:K3}}\label{fig:semi}
    \end{center}
    \end{figure}

\section{A simplicial complex $K_{C,G}^{\mathrm{odd}}$ and its subcomplexes} \label{sec4}

    In this section, we will show that a simplicial complex $K_{C,G}^{\mathrm{odd}}$ is homotopy equivalent to $K_{C,H}^{\mathrm{odd}}$ for some $H\lessdot G$, 
    and we will prove Theorem~\ref{thm:main}.

    Let $G$ be a connected pseudograph with node set $V = [n+1]$ and bundles $B_1$, \ldots, $B_k$, where $|B_i| = b_i + 1$ for each $i = 1,...,k$, as in the previous section.

    The following lemma is useful to find a simplicial subcomplex which is homotopy equivalent to a given simplicial complex.
    \begin{lemma}[Lemma~5.2 of \cite{CP}] \label{lem:st}
        Let $I$ be a vertex of a simplicial complex  {$K$} and suppose that the link $\Lk I$ of $I$ in $K$ is contractible.
        Then {$K$} is homotopy equivalent to the complex ${K\setminus\St I}$, where $\St I$ is the star of $I$.
    \end{lemma}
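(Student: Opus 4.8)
The plan is to exhibit $K$ as a pushout along $\Lk I$ and then collapse the contractible piece. First I would fix notation for the two relevant subcomplexes: let $D := K\setminus\St I$ be the deletion, consisting of all simplices of $K$ that do not contain $I$, and let $\overline{\St I} := \{I\}\ast\Lk I$ be the closed star, consisting of all faces of simplices that do contain $I$. A simplex-by-simplex check then yields the two identities
\[
K = D\cup\overline{\St I}, \qquad D\cap\overline{\St I} = \Lk I.
\]
Indeed, a simplex lies in $\overline{\St I}$ or in $D$ according to whether it contains $I$ or not, and it lies in both exactly when it omits $I$ yet spans a simplex together with $I$, which is the defining condition for membership in $\Lk I$.

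Next I would collect the homotopy-theoretic inputs. The closed star $\overline{\St I}=\{I\}\ast\Lk I$ is a cone with apex $I$ and is therefore contractible, while $\Lk I$ is contractible by hypothesis; hence the inclusion $\Lk I\hookrightarrow\overline{\St I}$ is a map of contractible spaces and so a homotopy equivalence. Since inclusions of subcomplexes are cofibrations, $\Lk I\hookrightarrow D$ is a cofibration. Viewing the first identity above as the pushout $K = D\cup_{\Lk I}\overline{\St I}$, I would then apply the gluing lemma in the form \emph{a pushout of a homotopy equivalence along a cofibration is again a homotopy equivalence}: because $\Lk I\hookrightarrow\overline{\St I}$ is an equivalence and $\Lk I\hookrightarrow D$ is a cofibration, the induced inclusion $D\hookrightarrow K$ is a homotopy equivalence. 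This is precisely $K\simeq K\setminus\St I$.

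There is little genuine difficulty here; the one point to be careful about is to invoke the gluing lemma in its homotopy-invariant form rather than the weaker Mayer--Vietoris statement, which would only compare homology. It is worth emphasizing where the hypothesis is spent: the closed star is \emph{always} a contractible cone, so the conclusion would fail without further input, and it is precisely the contractibility of $\Lk I$ that makes $\Lk I\hookrightarrow\overline{\St I}$ an equivalence and drives the collapse. Equivalently, since $\Lk I\hookrightarrow\overline{\St I}$ is simultaneously a cofibration and a homotopy equivalence, $\Lk I$ is a strong deformation retract of the closed star; extending such a deformation by the identity on $D$ (the two agree on the overlap $\Lk I$) exhibits $K\setminus\St I$ as a deformation retract of $K$ and gives the equivalence directly, without citing the abstract lemma.
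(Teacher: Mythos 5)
Your proof is correct. Note that the paper does not prove this lemma at all --- it is quoted verbatim as Lemma~5.2 of \cite{CP} --- so there is nothing internal to compare against; your argument (writing $K$ as the deletion glued to the closed star $\{I\}\ast\Lk I$ along $\Lk I$, observing that both the cone and the link are contractible, and then either invoking the gluing lemma or extending the strong deformation retraction of the closed star onto $\Lk I$ by the identity on the deletion) is the standard proof and is essentially the one given in the cited reference.
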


    For each $C\in 2^{\cC_G}_{\mathrm{even}}$, let $\widetilde{\Gamma}_G(C)$ be the subgraph of $G$ induced by the node set $V_C:=\{v\in V\mid v\in C \text{ or }v \text{ is an endpoint of an edge }e\in C\}$.
    We define a simplicial subcomplex $K'_{C,G}$ of $K_{C,G}^{\mathrm{odd}}$ by
    $$\{\text{the vertices of }K'_{C,G}\}=\{\text{tubes }I\text{ of }G\text{ such that }I\subseteq \widetilde{\Gamma}_G(C)\text{ and }|I\cap C|\text{ is odd}\}.$$
    We note that the vertices of $K'_{C,G}$ are tubes of $\widetilde{\Gamma}_G(C)$ unless $I= \widetilde{\Gamma}_G(C)$.
    \begin{example}\label{ex2}
        Let $G$ be a connected pseudograph with four nodes and two bundles in Figure~\ref{fig:house graph}.
        Let $C_1=13ab$ and $C_2=12cd$, then
        $\widetilde{\Gamma}_G(C_1)=123ab$ and $\widetilde{\Gamma}_G(C_2)=124abcd$.
        \begin{figure}[h]
        \begin{center}
        \begin{subfigure}[b]{.25\textwidth}
        \centering
        \begin{tikzpicture}
        	\fill (0,0) circle(2pt);
        	\fill (2,0) circle(2pt);
        	\fill (0,2) circle(2pt);
        	\fill (2,2) circle(2pt);
        	\draw (0,0)--(0,2)--(2,2)--(2,0);
        	\draw (0,2)..controls (1,2.5)..(2,2);
        	\draw (2,0)..controls (2.5,1)..(2,2);
        	\draw (-0.3,0) node{\footnotesize$3$};
        	\draw (2.3,0) node{\footnotesize$4$};
        	\draw (-0.3,2) node{\footnotesize$1$};
        	\draw (2.3,2) node{\footnotesize$2$};
        	\draw (1,2.6) node{\footnotesize$a$};
        	\draw (1,1.8) node{\footnotesize$b$};
        	\draw (2.1,1) node{\footnotesize$c$};
        	\draw (2.6,1) node{\footnotesize$d$};
        \end{tikzpicture}
        \caption*{\tiny A pseudograph $G$}
        \end{subfigure}
        \begin{subfigure}[b]{.25\textwidth}
        \centering
        \begin{tikzpicture}
        	\fill (0,0) circle(2pt);
        	\fill (0,2) circle(2pt);
        	\draw (0,0)--(0,2)--(2,2);
        	\draw (0,2)..controls (1,2.7)..(2,2);
        	\draw (-0.3,0) node{\blue{\footnotesize$3$}};
        	\draw (-0.3,2) node{\blue{\footnotesize$1$}};
        	\draw (2.3,2) node{\footnotesize$2$};
        	\draw (1,2.7) node{\blue{\footnotesize$a$}};
        	\draw (1,1.7) node{\blue{\footnotesize$b$}};	
            \filldraw[fill=black] (2,2) circle(2pt);
        \end{tikzpicture}
        \caption*{\tiny   $\widetilde\Gamma_G(13ab)=123ab$.}
          \end{subfigure}
        \hspace{-0.5cm}
        \begin{subfigure}[b]{.25\textwidth}
        \centering
        \begin{tikzpicture}
        	\fill (2,0) circle(2pt);
        	\fill (0,2) circle(2pt);
        	\fill (2,2) circle(2pt);
        	\draw  (2,0)--(2,2)--(0,2);
        	\draw (0,2)..controls (1,2.5)..(2,2);
        	\draw (2,0)..controls (2.5,1)..(2,2);
        	\draw (2.3,0) node{\footnotesize$4$};
        	\draw (-0.3,2) node{\blue{\footnotesize$1$}};
        	\draw (2.3,2) node{\blue{\footnotesize$2$}};
        	\draw (1,2.6) node{\footnotesize$a$};
        	\draw (1,1.8) node{\footnotesize$b$};
        	\draw (2.1,1) node{\blue{\footnotesize$c$}};
        	\draw (2.6,1) node{\blue{\footnotesize$d$}};
        \end{tikzpicture}
        \caption*{\tiny $\widetilde{\Gamma}_G(12cd)=124abcd$.}
        \end{subfigure}
        \hspace{-0.5cm}

        \caption{Examples of $\widetilde{\Gamma}_G$} \label{fig:house graph}
        \end{center}
        \end{figure}
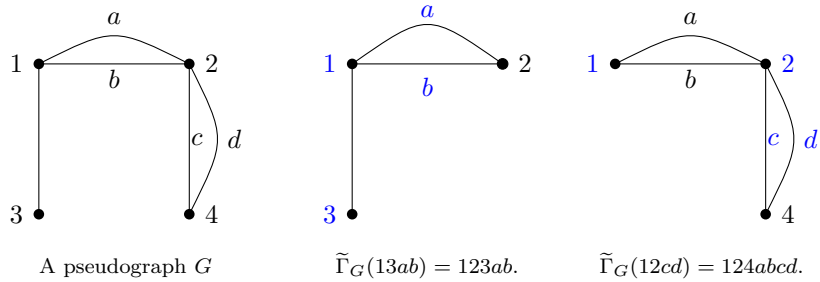

    Note that the vertices of  $K'_{C_1,G}$ are
    \[ 1, 3, 12ab, 123a, \text{ and } 123b,\] and the vertices of $K^{\mathrm{odd}}_{C_1,G}$  are  $124abc$, $124abd$, $124abcd$, $1234ac$, $1234ad$, $1234acd$, $1234bc$, $1234bd$, $1234bcd$, together with vertices of $K'_{C_1,G}$. Figure~\ref{ex:mainex1} shows that $K^{\mathrm{odd}}_{C_1,G}$ and $K'_{C_1,G}$ are homotopy equivalent.

    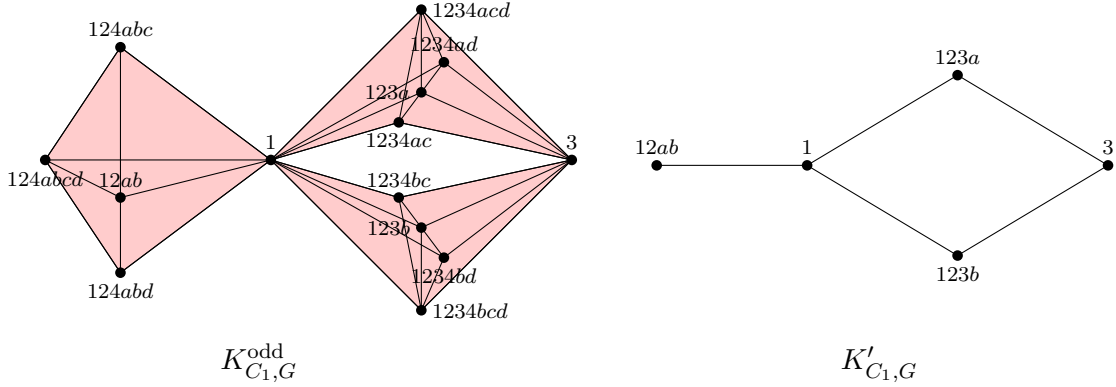
\begin{figure}[h]
    \begin{center}
    \begin{subfigure}[b]{.45\textwidth}
    \centering
    \begin{tikzpicture}
        \path (0,0) coordinate (1)        (-2,-0.5) coordinate (12ab)
            (-3,0) coordinate (124abcd) (-2,1.5) coordinate (124abc)    (-2,-1.5) coordinate (124abd)
            (1.7,0.5) coordinate (1234ac)     (2,0.9) coordinate (123a)       (2,2) coordinate (1234acd)
            (1.7,-0.5) coordinate (1234bc)    (2,-0.9) coordinate (123b)      (2,-2) coordinate (1234bcd)
            (4,0) coordinate (3);
        \path    (2.3,1.3) coordinate (1234ad)    (2.3,-1.3) coordinate (1234bd);
        \draw[fill=red!20]  (1)--(124abc)--(124abcd)--(124abd)--cycle;
        \draw[fill=red!20]  (1)--(1234acd)--(3)--(1234ac)--cycle;
        \draw[fill=red!20]  (1)--(1234bcd)--(3)--(1234bc)--cycle;
        \path (1) edge (12ab) edge (124abc) edge (124abd) edge (1234ac) edge(123a) edge(1234ad) edge(123b) edge(1234bc) edge(1234bd) (124abcd) edge (124abc) edge (12ab) edge (124abd)
              (3)         edge (1234ac) edge(123a) edge(1234ad) edge(123b) edge(1234bc) edge(1234bd)
              (12ab)      edge (124abc) edge (124abd)
              (123a)      edge (1234ad) edge (1234ac)
              (123b)      edge (1234bc) edge (1234bd) edge (1234bcd) edge (123b);
        \path (123a) edge (1234acd);
        \path(1234acd) edge (1);
        \path (1234acd) edge (3);
        \path (1234acd) edge (1234ad);
        \path (1234acd) edge (1234ac);
        \path (1234bcd) edge (1);
        \path (1234bcd) edge (3);
        \path (1234bcd) edge (1234bc);
        \path (1234bcd) edge (1234bd);
        \path (124abcd) edge (1);
        \fill (1) node[above]{\tiny$1$}
              (12ab) node[above]{\tiny $12ab$}
              (124abcd) node[below]{\tiny $124abcd$}
              (124abd) node[below]{\tiny $124abd$}
              (124abc) node[above]{\tiny $124abc$}
              (1234ac) node[below]{\tiny $1234ac$}
              (123a) node[left]{\tiny $123a$}
              (1234ad) node[above]{\tiny $1234ad$}
              (1234bc) node[above]{\tiny $1234bc$}
              (123b) node[left]{\tiny $123b$}
              (1234bd) node[below]{\tiny $1234bd$}
              (3) node[above]{\tiny $3$};
        \fill  (1234bcd) node[right]{\tiny $1234bcd$};
        \fill (1234acd) node[right]{\tiny $1234acd$};
        \fill (1) circle (2pt)
              (12ab) circle (2pt)
              (124abcd) circle (2pt)
              (124abd) circle (2pt)
              (124abc) circle (2pt)
              (1234ac) circle (2pt)
              (123a) circle (2pt)
              (1234ad) circle (2pt)
              (1234bc) circle (2pt)
              (123b) circle (2pt)
              (1234bd) circle (2pt)
              (3) circle (2pt);
        \fill (1234acd) circle (2pt);
        \fill (1234bcd) circle (2pt);
    \end{tikzpicture}
    \caption*{$K^{\mathrm{odd}}_{C_1,G}$}
    \end{subfigure}
    \hspace{1cm}
    \begin{subfigure}[b]{.45\textwidth}
    \centering
    \begin{tikzpicture}
        \path (0,0) coordinate (1)        (-2,0) coordinate (12ab)
              (2,1.2) coordinate (123a)
              (2,-1.2) coordinate (123b)
              (4,0) coordinate (3);
        \path (1) edge (12ab) edge(123a) edge(123b)
              (3) edge(123a) edge(123b);
        \fill (1) node[above]{\tiny$1$}
              (12ab) node[above]{\tiny $12ab$}
              (123a) node[above]{\tiny $123a$}
              (123b) node[below]{\tiny $123b$}
              (3) node[above]{\tiny $3$};
        \fill (1) circle (2pt)
              (12ab) circle (2pt)
              (123a) circle (2pt)
              (123b) circle (2pt)
              (3) circle (2pt);
    \end{tikzpicture}
    \vspace{0.5cm}
    \caption*{$K'_{C_1,G}$}
    \end{subfigure}
    \caption{For the pseudograph $G$ and $C_1=13ab$ in Figure~\ref{fig:house graph}, two simplicial complexes  $K^{\mathrm{odd}}_{C_1,G}$ and $K'_{C_1,G}$ are homotopy equivalent.}\label{ex:mainex1}
    \end{center}
    \end{figure}
    \end{example}
    The following lemma shows that the phenomenon in the above example holds for any $C\in 2^{\cC_G}_{\mathrm{even}}$.

    \begin{lemma}\label{thm:KT}
      $K_{C,G}^{\mathrm{odd}}$ is homotopy equivalent to $K'_{C,G}$.
    \end{lemma}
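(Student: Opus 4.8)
The plan is to prove Lemma~\ref{thm:KT} by repeatedly applying Lemma~\ref{lem:st}, peeling off one vertex at a time while preserving the homotopy type, until $K_{C,G}^{\mathrm{odd}}$ is reduced to $K'_{C,G}$. First I would record that both complexes are full subcomplexes of the dual of $\partial P_G$ restricted to odd tubes: a set of odd tubes spans a simplex exactly when it is a tubing (the facets meet at a face of the simple polytope $P_G$), so $K'_{C,G}$ is the full subcomplex induced on those odd tubes $I$ with $I\subseteq\widetilde\Gamma_G(C)$. Thus the vertices I must delete are precisely the \emph{irrelevant} odd tubes, namely those $I$ whose node set meets $V\setminus V_C$. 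It therefore suffices to show that these irrelevant vertices can be removed one by one, each with a contractible link in the current complex, so that Lemma~\ref{lem:st} keeps the homotopy type at every step and the end result is exactly $K'_{C,G}$.

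The mechanism I would use to make a link contractible is to exhibit a cone apex inside it. Given an irrelevant tube $I$, pick a distinguished irrelevant node $w\in V(I)\setminus V_C$ and set $\widehat I$ to be the tube obtained by deleting $w$ together with its incident edges from $I$. The crucial parity check is clean: since $w\notin V_C$ we have $w\notin C$, and no edge of $C$ can be incident to $w$ (any $C$-edge has both endpoints in $V_C$ by the definition of $V_C$); hence $|\widehat I\cap C|=|I\cap C|$ is again odd, so $\widehat I$ is a legitimate vertex of the complex. Because $\widehat I\subsetneq I$, it meets every supertube of $I$ by inclusion and every tube separated from $I$ again by separation, so $\widehat I$ is automatically compatible with all such neighbors; and any subtube $T$ of $I$ that \emph{does not} contain $w$ satisfies $T\subseteq\widehat I$ and is compatible as well. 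Consequently $\Lk I$ becomes a cone with apex $\widehat I$, provided the only remaining issue --- subtubes $T\subsetneq I$ that still contain $w$ --- has been eliminated by the order of deletion.

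The hard part, and the place where the bookkeeping must be engineered carefully, is exactly this interplay between the order of deletion and the apex. There are two competing constraints. On one hand, the \emph{sibling} subtubes $T\subsetneq I$ containing $w$ are themselves irrelevant and must already be gone when $I$ is processed, which pushes toward removing smaller irrelevant tubes first. On the other hand, the apex $\widehat I$ must still be \emph{present} in the current complex, and $\widehat I$ may itself be irrelevant when $I$ carries more than one irrelevant node, which pushes the opposite way. I would resolve this by fixing a linear order on $V\setminus V_C$ and eliminating irrelevant nodes one index at a time, from the largest down: at the stage governed by the current top node $w$ I remove all surviving tubes containing $w$, processed in order of increasing size, and I delete precisely that node $w$ to form $\widehat I$. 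Then every smaller tube through $w$ has already been removed (so the troublesome siblings are gone), while $\widehat I$, no longer containing $w$, belongs to a strictly later stage and is therefore still present, and every surviving in-$I$ neighbor is either relevant (hence avoids $w$ and lies in $\widehat I$) or a supertube/separated tube handled above. The remaining genuine obstacle is \emph{connectivity}: deleting $w$ disconnects $I$ when $w$ is a cut vertex, in which case $\widehat I$ is not a tube. I expect this to be the main technical point, handled by choosing $w$ so that $\widehat I$ stays connected (stripping an irrelevant node that is peripheral in $I$) and by treating the cut-vertex situation separately, verifying contractibility of $\Lk I$ through an alternative compatible tube. Once all irrelevant vertices are removed in this fashion, the surviving complex is $K'_{C,G}$, completing the proof of Lemma~\ref{thm:KT}.
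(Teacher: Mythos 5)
Your overall strategy is the paper's: remove the vertices of $K_{C,G}^{\mathrm{odd}}\setminus K'_{C,G}$ one at a time, showing each link is a cone so that Lemma~\ref{lem:st} preserves the homotopy type. But your execution has a genuine gap exactly where you flag it, and it is not a minor technicality: your cone apex $\widehat I$ is obtained by stripping a \emph{single} irrelevant node $w$ from $I$, and when $w$ is a cut node of $I$ (which is unavoidable, e.g.\ an irrelevant interior node of a path), $\widehat I$ is not a tube and your argument has no apex. Your proposed fix of ``choosing $w$ peripheral in $I$'' is incompatible with your own deletion schedule, since the stage dictates which node $w$ is being stripped, not the tube $I$; and the fallback of ``an alternative compatible tube'' is never exhibited. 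Since producing the apex is the entire content of the lemma, this cannot be waved away.

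The paper's proof avoids the problem by a different choice of apex and a different ordering. It processes the tubes of $K^{\mathrm{odd}}_{C,G}\setminus K'_{C,G}$ \emph{smallest first} (formally, via a minimal counterexample $K^\ast$ and a minimal $I\in K^\ast\setminus K'_{C,G}$), deletes \emph{all} nodes of $I$ outside $\widetilde\Gamma_G(C)$ at once to get $\widetilde I$, and then takes as apex a connected component $\widetilde I_0$ of $\widetilde I$ with $|\widetilde I_0\cap C|$ odd --- such a component exists because $|I\cap C|=|\widetilde I\cap C|$ is odd, and it is connected by construction, so the cut-node issue never arises. Moreover $\widetilde I_0$ is a vertex of $K'_{C,G}$ itself, hence is never removed, so no interleaving of stages is needed; and minimality of $I$ forces every proper subtube $L\subsetneq I$ still present to lie in $\widetilde\Gamma_G(C)$, whence $L$ sits in a single component of $\widetilde I$ and is compatible with $\widetilde I_0$ either by inclusion or by separation (the latter using that $I$ is semi-induced). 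If you replace your apex by ``a component of $I\setminus\{w\}$ with odd intersection with $C$'' and verify it is still present under your schedule, your argument can be repaired, but as written the cut-node case is an open hole.
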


    \begin{proof}
        It is enough to consider the case when $\widetilde{\Gamma}_G(C)$ is a proper subgraph of $G$. For simplicity, we write $K_G$ and $K'$ instead of $K_{C,G}^{\mathrm{odd}}$ and $K'_{C,G}$, respectively. We will show that we can eliminate the stars of all vertices in $K_G\setminus K'$, one by one, from $K_G$ to $K'$, without changing the homotopy type.

    Suppose that we cannot eliminate the stars of all vertices in $K_G\setminus K'$ one by one, without changing the homotopy type. Then we can obtain a minimal complex $K^\ast\supsetneq K'$, which is obtained by eliminating the stars of some vertices in $K_G\setminus K'$ without changing the homotopy type.
    Then take a vertex $I$ which is minimal in $K^\ast\setminus K'$. Since $I\in K_G$ and $I\not\in K'$, $|I\cap C|$ is odd and $I\setminus \widetilde{\Gamma}_G(C)\neq\emptyset$.
    Since $\widetilde{\Gamma}_G(C)$ is an induced subgraph, $I\setminus \widetilde{\Gamma}_G(C)$ contains a node.
    Let $\widetilde{I}$ be the subgraph of $I$ which is obtained from $I$ by deleting the nodes not in $ \widetilde{\Gamma}_G(C)$.
    Note that from the definitions of $\widetilde{\Gamma}_G(C)$ and $\widetilde{I}$, $I\cap C=\widetilde{I}\cap C$.
    Then we can choose a connected component $\widetilde{I}_0$ of $\widetilde{I}$ such that $|\widetilde{I}_0\cap C|$ is odd. Since $\widetilde{I}_0\subset \widetilde{I} \subset \widetilde{\Gamma}_G(C)$, we conclude that $\widetilde{I}_0\in K'$.

    We will show that any vertex of $\Lk I$ meets with $\widetilde{I}_0$.
Let $L\in \Lk I$ in $K^\ast$. If $I$ and $L$ meet by separation, then $\widetilde{I}_0$ and $L$ also meet by separation. Now assume that $I$ and $L$ meet by inclusion. If $I\subset L$, then $\widetilde{I}_0\subset L$. Suppose that $L\subsetneq I$.
The minimality of $I$ implies that $L\in K'$, and hence $L \subseteq \widetilde{\Gamma}_G(C)$. As $L$ is a connected subgraph contained in $I$ and $I \cap \widetilde{\Gamma}_G(C) = \widetilde{I}$, $L$ must be contained in some connected component of $\widetilde{I}$. Hence, either $L$ is contained in $\widetilde{I}_0$, or $L$ and $\widetilde{I}_0$ meet by separation. Therefore, $\Lk I$ is contractible and $K^\ast$ is homotopy equivalent to $K^\ast\setminus \St I$ by Lemma~\ref{lem:st}. Then $K^\ast\setminus \St I$ is smaller than $K^\ast$, which contradicts the minimality of $K^\ast$.
    \end{proof}

    Let us consider a simplicial subcomplex $K''_{C,G}$ of $K'_{C,G}$ whose vertex set consists of vertices $I$ of $K'_{C,G}$ such that, for each bundle $B$ of $\widetilde{\Gamma}_G(C)$ satisfying $B \cap C = \emptyset$, if the endpoints of $B$ are in $I$, then $B \subset I$. That is, we obtain $K''_{C,G}$ from $K'_{C,G}$ by removing the stars of the vertices $I$ if there exists a bundle $B$ of $\widetilde\Gamma_G(C)$ such that $B\cap C=\emptyset$, $I$ contains the endpoints of $B$, and $B\not\subset I$.

    Back to Example~\ref{ex2}, the vertices of $K''_{C_2,G}$ are \[1,2,24cd, 124abc, \text{ and } 124abd,\] and the vertices of $K'_{C_2,G}$ are $124ac,124bc,124ad,124bd$, together with the vertices of $K''_{C_2,G}$. Figure~\ref{ex:mainex2} shows that $K'_{C_2,G}$ and $K''_{C_2,G}$ are homotopy equivalent.
    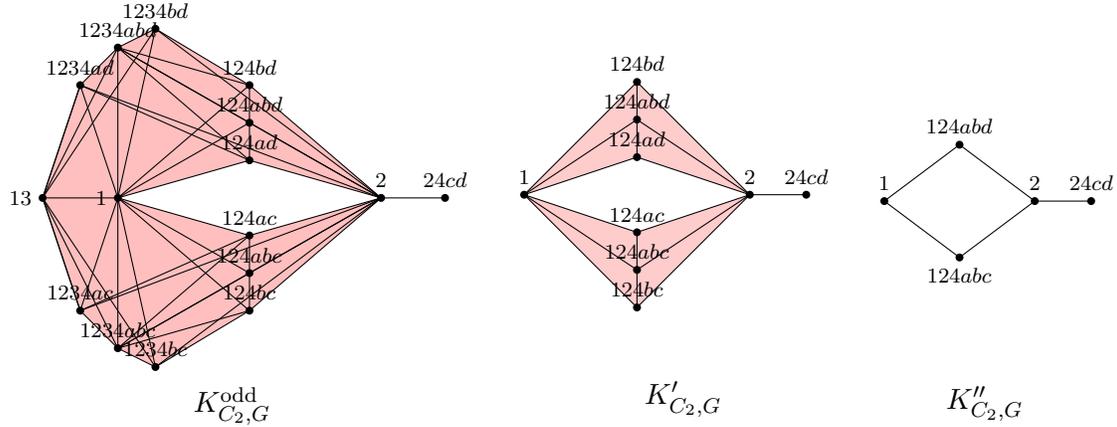
\begin{figure}[h]
    \begin{center}
    \begin{subfigure}{.4\textwidth}
    \begin{tikzpicture}[scale=.5]
        \path
        (-2,0) coordinate (13)
        (-1,3) coordinate (1234ad)
        (0,4) coordinate (1234abd)
        (1,4.5) coordinate (1234bd)
        (-1,-3) coordinate (1234ac)
        (0,-4) coordinate (1234abc)
        (1,-4.5) coordinate (1234bc)
        (3.5,1) coordinate (124ad)
        (3.5,2) coordinate (124abd)
        (3.5,3) coordinate (124bd)
        (3.5,-1) coordinate (124ac)
        (3.5,-2) coordinate (124abc)
        (3.5,-3) coordinate (124bc)
        (0,0) coordinate (1)
        (7,0) coordinate (2)
        (8.7,0) coordinate (24cd);
        \fill[red!25] (13)--(1)--(124ad)--(2)--(124bd)--(1234bd)--(1234abd)--(1234ad)--cycle;
        \fill[red!25] (13)--(1)--(124ac)--(2)--(124bc)--(1234bc)--(1234abc)--(1234ac)--cycle;
        \fill (13) circle (3pt) (1234ad) circle (3pt) (1234abd) circle (3pt) (1234bd) circle (3pt) (1234ac) circle (3pt) (1234abc) circle (3pt) (1234bc) circle (3pt) (124ad) circle (3pt) (124abd) circle (3pt) (124bd) circle (3pt) (124ac) circle (3pt) (124abc) circle (3pt) (124bc) circle (3pt) (1) circle (3pt) (2) circle (3pt) (24cd) circle (3pt);
        \fill
        (1) node[left]{\tiny$1$}
        (13) node[left]{\tiny$13$}
        (1234ad) node[above]{\tiny$1234ad$}
        (1234abd) node[above]{\tiny$1234abd$}
        (1234bd) node[above]{\tiny$1234bd$}
        (1234ac) node[above]{\tiny$1234ac$}
        (1234abc) node[above]{\tiny$1234abc$}
        (1234bc) node[above]{\tiny$1234bc$}
        (124ad) node[above]{\tiny$124ad$}
        (124abd) node[above]{\tiny$124abd$}
        (124bd) node[above]{\tiny$124bd$}
        (124ac) node[above]{\tiny$124ac$}
        (124abc) node[above]{\tiny$124abc$}
        (124bc) node[above]{\tiny$124bc$}
        (2) node[above]{\tiny$2$}
        (24cd) node[above]{\tiny$24cd$};
        \path (13) edge (1) edge (1234abd) edge (1234ad) edge (1234bd) edge (1234ac) edge (1234abc) edge (1234bc);
        \path (1234bd) edge (1) edge (124bd) edge (2);
        \path (1234abd) edge (1) edge (124abd) edge (124ad) edge (124bd) edge (2);
        \path (1234ad) edge (1) edge (124ad) edge (2);
        \path (1234bc) edge (1) edge (124bc) edge (2);
        \path (1234abc) edge (1) edge (124abc) edge (124ac) edge (124bc) edge (2);
        \path (1234ac) edge (1) edge (124ac) edge (2);
        \path (1) edge (124ad) edge (124abd) edge (124bd) edge (124ac) edge (124abc) edge (124bc);
        \path (2) edge (124ad) edge (124abd) edge (124bd) edge (124ac) edge (124abc) edge (124bc) edge (24cd);
        \draw (1234bd)--(1234abd)--(1234ad)--(13)--(1234ac)--(1234abc)--(1234bc);
        \draw (124bd)--(124abd)--(124ad);
        \draw (124ac)--(124abc)--(124bc);
    \end{tikzpicture}
    \caption*{$K_{C_2,G}^{\mathrm{odd}}$}
    \end{subfigure}
    \hspace{0.3cm}
    \begin{subfigure}{.3\textwidth}
    \vspace{0.6cm}
    \begin{tikzpicture}[scale=.5]
        \path
        (3,1) coordinate (124ad)
        (3,2) coordinate (124abd)
        (3,3) coordinate (124bd)
        (3,-1) coordinate (124ac)
        (3,-2) coordinate (124abc)
        (3,-3) coordinate (124bc)
        (0,0) coordinate (1)
        (6,0) coordinate (2)
        (7.5,0) coordinate (24cd);
        \fill[red!20] (1)--(124ad)--(2)--(124bd)--cycle;
        \fill[red!20] (1)--(124ac)--(2)--(124bc)--cycle;
        \fill
        (1) node[above]{\tiny$1$}
        (124ad) node[above]{\tiny$124ad$}
        (124abd) node[above]{\tiny$124abd$}
        (124bd) node[above]{\tiny$124bd$}
        (124ac) node[above]{\tiny$124ac$}
        (124abc) node[above]{\tiny$124abc$}
        (124bc) node[above]{\tiny$124bc$}
        (2) node[above]{\tiny$2$}
        (24cd) node[above]{\tiny$24cd$};
        \fill (124ad) circle (3pt) (124abd) circle (3pt) (124bd) circle (3pt) (124ac) circle (3pt) (124abc) circle (3pt) (124bc) circle (3pt) (1) circle (3pt) (2) circle (3pt) (24cd) circle (3pt);
        \path (1) edge (124ad) edge (124abd) edge (124bd) edge (124ac) edge (124abc) edge (124bc);
        \path (2) edge (124ad) edge (124abd) edge (124bd) edge (124ac) edge (124abc) edge (124bc) edge (24cd);
        \draw (124bd)--(124abd)--(124ad);
        \draw (124ac)--(124abc)--(124bc);
    \end{tikzpicture}
    \vspace{0.7cm}
    \caption*{$K'_{C_2,G}$}
    \end{subfigure}
    \begin{subfigure}{.2\textwidth}
    \vspace{1.5cm}
    \begin{tikzpicture}[scale=.5]
        \path
        (2,1.5) coordinate (124abd)
        (2,-1.5) coordinate (124abc)
        (0,0) coordinate (1)
        (4,0) coordinate (2)
        (5.5,0) coordinate (24cd);
        \fill (124abd) circle (3pt) (124abc) circle (3pt) (1) circle (3pt) (2) circle (3pt) (24cd) circle (3pt);
        \fill
        (1) node[above]{\tiny$1$}
        (124abd) node[above]{\tiny$124abd$}
        (124abc) node[below]{\tiny$124abc$}
        (2) node[above]{\tiny$2$}
        (24cd) node[above]{\tiny$24cd$};
        \path (1) edge (124abd) edge (124abc);
        \path (2) edge (124abd) edge (124abc) edge (24cd);
    \end{tikzpicture}\vspace{1cm}
    \caption*{$K''_{C_2,G}$}
    \end{subfigure}
    \end{center}
    \caption{For $C_2=12cd$, three simplicial complexes $K_{C_2,G}^{\mathrm{odd}}$, $K'_{C_2,G}$, and $K''_{C_2,G}$ are homotopy equivalent.}\label{ex:mainex2}
    \end{figure}

    \begin{lemma}\label{lem:LCodd}
       $K'_{C,G}$ is homotopy equivalent to $K''_{C,G}$.
    \end{lemma}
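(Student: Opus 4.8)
The plan is to argue exactly as in the proof of Lemma~\ref{thm:KT}, peeling off one vertex at a time by means of Lemma~\ref{lem:st}. First I would record the structural fact that makes everything a flag complex: a set of tubes spans a simplex of $K_{C,G}^{\mathrm{odd}}$ precisely when the tubes are pairwise compatible (that is, form a tubing), so $K_{C,G}^{\mathrm{odd}}$, together with its induced subcomplexes $K'_{C,G}$ and $K''_{C,G}$, are clique complexes on their vertex sets. In particular $K''_{C,G}$ is the full subcomplex of $K'_{C,G}$ spanned by the \emph{good} vertices, namely those tubes $I$ that, for every bundle $B$ of $\widetilde{\Gamma}_G(C)$ with $B\cap C=\emptyset$ whose endpoints lie in $I$, contain all of $B$; the \emph{bad} vertices are exactly the ones whose stars are to be deleted. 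Since a vertex of a flag complex whose link possesses a cone point has contractible link, Lemma~\ref{lem:st} then allows me to remove such a vertex's star without changing the homotopy type.

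Next I would set up a minimality argument. Suppose the stars of the bad vertices cannot be removed one by one down to $K''_{C,G}$ while preserving the homotopy type, and choose a complex $K^\ast$ with $K''_{C,G}\subsetneq K^\ast\subseteq K'_{C,G}$, obtained from $K'_{C,G}$ by such star deletions and homotopy equivalent to it, that is minimal (fewest vertices) among all such complexes. Since $K^\ast\neq K''_{C,G}$, it still contains a bad vertex, and I would pick one, say $I$, that is \emph{maximal} among the bad vertices of $K^\ast$ under containment of tubes. The key construction is the \emph{completion} $\widehat I$ of $I$: the tube obtained from $I$ by adjoining, for every bundle $B$ of $\widetilde{\Gamma}_G(C)$ with $B\cap C=\emptyset$ whose two endpoints are nodes of $I$, all the edges of $B$. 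Because $I$ and $\widehat I$ have the same node set and only edges disjoint from $C$ are added, $\widehat I$ is again a tube contained in $\widetilde{\Gamma}_G(C)$ with $\widehat I\cap C=I\cap C$ odd; by construction $\widehat I$ is good, so $\widehat I$ is a vertex of $K''_{C,G}$ and hence of $K^\ast$, and $I\subsetneq\widehat I$ since $I$ is bad.

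I would then show that $\widehat I$ is a cone point of $\Lk I$ in $K^\ast$, so that this link is contractible by the flag property. Let $J\in\Lk I$ in $K^\ast$; I must verify that $J$ is compatible with $\widehat I$. If $J$ and $I$ meet by separation, then, as $\widehat I$ has the node set of $I$, so do $J$ and $\widehat I$; and if $J\subseteq I$ then $J\subseteq I\subseteq\widehat I$. The only delicate case, and the one I expect to be the main obstacle, is $I\subsetneq J$, where I use the maximality of $I$: since $J\in K^\ast$ and $I\subsetneq J$, the vertex $J$ cannot be bad, so $J$ is good, and goodness of $J$ forces $J$ to contain in full every bundle $B$ with $B\cap C=\emptyset$ whose endpoints lie in $J$, in particular every bundle completed in passing from $I$ to $\widehat I$; hence $\widehat I\subseteq J$. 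Thus $\widehat I$ is compatible with every vertex of $\Lk I$ in $K^\ast$, so this link is a cone and is contractible, and Lemma~\ref{lem:st} yields $K^\ast\simeq K^\ast\setminus\St I$. As $I$ is bad, $K^\ast\setminus\St I$ still contains $K''_{C,G}$, is reached by bad-star deletions, is homotopy equivalent to $K'_{C,G}$, and is strictly smaller, contradicting minimality. Therefore the minimal $K^\ast$ equals $K''_{C,G}$, giving $K'_{C,G}\simeq K''_{C,G}$.
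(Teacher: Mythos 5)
Your proposal is correct and follows essentially the same route as the paper's proof: the same minimal-counterexample setup, the same choice of a maximal remaining ``bad'' vertex $I$, and the same cone-point argument showing that the completed tube (your $\widehat{I}$, the paper's $\widetilde{I}$) is compatible with every vertex of $\Lk I$, so that Lemma~\ref{lem:st} applies. The only differences are cosmetic (the explicit flag-complex remark and the good/bad terminology).
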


    \begin{proof} For simplicity, we write $K'$ and $K''$ instead of $K'_{C,G}$ and $K''_{C,G}$, respectively.
    We give a surjective map from the vertices of $K'$ to the vertices of $K''$ so that $I\in K'$ corresponds to $\widetilde{I}\in K''$ where
    $\widetilde{I}$ is a subgraph of $G$ obtained from $I$ by transforming $I\cap B$ (if it is non-empty) into a bundle $B$ whenever there exists a bundle $B$ of $G$ such that  $C\cap B=\emptyset$.
    Note that for any $I \in K'$, $I=\widetilde{I}$ if and only if $I \in K''$.

    Now we will show that we can eliminate the stars of all vertices in $ K' \setminus K''$, one by one, from $K'$ to $K''$, without changing the homotopy type. Suppose we cannot eliminate the stars of all vertices in $K'\setminus K''$ one by one, without changing the homotopy type. Then we can obtain a minimal complex $K^\ast\supsetneq K''$, which is obtained by eliminating the stars of some vertices in $K'\setminus K''$ as long as eliminating does not change the homotopy type.
    Then take a vertex $I$ which is maximal in $K^{\ast}\setminus K''$.
    Then $I\subsetneq \widetilde{I}$ and $\widetilde{I}\in K''$. We will show that $\widetilde{I}$ meets every vertex in $\Lk I$. Take  $L\in\Lk I$. If $I$ and $L$ meet by separation, then so do $\widetilde{I}$ and $L$ because $I$ and $\widetilde{I}$ have the same node set. If $L\subseteq I$, then clearly $L\subseteq\widetilde{I}$. Now assume that $I\subsetneq L$. Then the maximality of $I$ implies $L\in K''$. Therefore $L=\widetilde{L}$.
    Since  $I\subsetneq L$, $\widetilde{I}\subset \widetilde{L}=L$.
    Thus $\widetilde{I}$ and $L$ meet by inclusion.
    Therefore $K^\ast$ is homotopy equivalent to $K^\ast\setminus\St (I)$ by Lemma~\ref{lem:st}.
    Then $K^\ast\setminus \St I$ is smaller than $K^\ast$, which contradicts the minimality of $K^{\ast}$.
    \end{proof}

    By Lemmas~\ref{thm:KT} and \ref{lem:LCodd}, formula~\eqref{lem:P'T} for the $i$th rational Betti number of $M_G$ for a connected pseudograph $G$ can be written as follows:
    \begin{equation}
        \beta^i(M_G) =\sum_{C\in 2^{{\cC}_G}_{\mathrm{even}}} \tilde{\beta}^{i-1}( K''_{C,G} ). \label{cor:main}
    \end{equation}

    Before we give a proof of Theorem~\ref{thm:main}, let us  prepare the following lemma which enables us to determine whether $K_{C,G}^{\mathrm{odd}}$ is contractible or not for a given collection $C\in 2^{\cC_G}_{\mathrm{even}}$.
    For $C\in 2^{\cC_G}$, a subgraph $J$ of a pseudograph $G$ is \emph{odd} (respectively, \emph{even}) with respect to $C$ if $|J\cap C|$ is odd (respectively, even).

    \begin{lemma}\label{thm:ignore_odd}
        For $C\in 2^{\cC_G}_{\mathrm{even}}$, let $\Gamma^1$, $\ldots$, $\Gamma^{q}$ be the connected components of $\widetilde\Gamma_G(C)$, and let $C^j=C\cap \Gamma^j$ for $j=1,\ldots,q$.
        \begin{itemize}
            \item[(i)] If some connected component of $\widetilde\Gamma_G(C)$ is odd with respect to $C$ (equivalently, $|C^{j}|$ is odd for some $j$), then  $K''_{C,G}$  is contractible.
            \item[(ii)] The simplicial complex $K''_{C,G}$ is the simplicial join of all $K''_{C^j,G}$'s for $j=1,\ldots,q$, and hence  $$\widetilde{H}_{i-1}(K''_{C,G};\Q)=\bigoplus_{\sum k_j=i-q}\bigotimes \widetilde{H}_{k_j}(K''_{C^j,G};\Q).$$
        \end{itemize}
    \end{lemma}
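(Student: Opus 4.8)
The plan is to prove (ii) first, as an \emph{equality} of simplicial complexes, and then deduce (i) by exhibiting a cone point. For (ii) the central observation is that every vertex of $K''_{C,G}$, being a connected tube contained in $\widetilde{\Gamma}_G(C)$, lies entirely inside a single connected component $\Gamma^j$. First I would check that $\widetilde{\Gamma}_G(C^j)=\Gamma^j$ (the vertex set $V_{C^j}$ equals $V(\Gamma^j)$, using that $\widetilde{\Gamma}_G(C)$ is induced so any multiple edge of $C$ with an endpoint in $\Gamma^j$ lies wholly in $\Gamma^j$), that $I\cap C=I\cap C^j$ for a tube $I\subseteq\Gamma^j$, and that the bundle condition defining $K''$ relative to $G$ restricts to the one relative to $\Gamma^j$. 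These identifications match the vertices of $K''_{C,G}$ lying in $\Gamma^j$ with the vertices of $K''_{C^j,G}$. The decisive step is that a tube $I\subseteq\Gamma^j$ and a tube $I'\subseteq\Gamma^{j'}$ with $j\neq j'$ are always compatible: their node sets are disjoint, and since $\widetilde{\Gamma}_G(C)$ is an induced subgraph of $G$ there is no edge of $G$ joining distinct components, so $I$ and $I'$ meet by separation. Hence a tubing supported on $\widetilde{\Gamma}_G(C)$ is exactly a choice of one tubing inside each $\Gamma^j$, and a union $\bigcup_j\sigma_j$ of such tubings is again a tubing; this is precisely the equality $K''_{C,G}=K''_{C^1,G}\ast\cdots\ast K''_{C^q,G}$. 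The homology formula is then the reduced K\"unneth formula for simplicial joins over the field $\Q$, namely $\widetilde{H}_{n}(K\ast L;\Q)\cong\bigoplus_{p+q=n-1}\widetilde{H}_p(K;\Q)\otimes\widetilde{H}_q(L;\Q)$, iterated over the $q$ factors; tracking the total degree shift $q-1$ yields the stated identity with $\sum k_j=i-q$.

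For (i), suppose $|C^j|$ is odd for some $j$; I claim the full component $\Gamma^j$ is a cone point of $K''_{C,G}$. Since every $|C\cap B_i|$ is even, one computes $|C^j|\equiv|C\cap V(\Gamma^j)|\pmod 2$, and because $\sum_j|C\cap V(\Gamma^j)|=|C\cap V|$ is even, the oddness of $|C^j|$ forces $\widetilde{\Gamma}_G(C)\neq G$; as $G$ is connected this makes $\Gamma^j$ a proper subgraph, hence a genuine connected induced tube of $G$. Being induced it satisfies the bundle condition, and $|\Gamma^j\cap C|=|C^j|$ is odd, so $\Gamma^j$ is a vertex of $K''_{C,G}$. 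Every other vertex either lies in $\Gamma^j$, and then meets it by inclusion, or lies in some $\Gamma^{j'}$ with $j'\neq j$, and then meets it by separation exactly as in (ii); thus $\Gamma^j$ is compatible with all vertices and adjoining it to any tubing again gives a tubing in $K''_{C,G}$. Therefore $K''_{C,G}$ is a (nonempty) cone with apex $\Gamma^j$, hence contractible.

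The main obstacle I anticipate is the bookkeeping around the definitions rather than any deep topology: one must verify carefully that $\widetilde{\Gamma}_G(C^j)=\Gamma^j$ and that the bundle conditions for $K''_{C,G}$ and $K''_{C^j,G}$ agree, so that the join decomposition in (ii) is a genuine equality of complexes and not merely a homotopy equivalence. The parity identity $|C^j|\equiv|C\cap V(\Gamma^j)|$, which is what guarantees that $\Gamma^j$ is proper and thus an admissible tube in case (i), is the other point requiring attention. Once these combinatorial points are settled, both conclusions are formal consequences: contractibility follows from the cone structure, and the homology formula from the join K\"unneth theorem over $\Q$.
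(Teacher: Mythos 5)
Your proof is correct and follows essentially the same route as the paper: the odd component $\Gamma^j$ serves as a cone point for (i), and the decomposition of $K''_{C,G}$ as the join of the $K''_{C^j,G}$ together with the K\"unneth formula for joins gives (ii). You supply one detail the paper leaves implicit---the parity argument showing that $\Gamma^j$ is a proper subgraph of $G$ and hence a genuine tube---but the overall strategy is identical.
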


    \begin{proof}
        We first assume that some connected component of $\widetilde\Gamma_G(C)$ is odd with respect to $C$.
        Without loss of generality, we may assume that $I^1$ is odd with respect to $C$.
        Then ${I^1}$ is a vertex of $K''_{C,G}$.
        Let $J$ be a tube in $K''_{C,G}$.
        Then $J$ is contained in ${\Gamma^j}$ for some $j$.
        If $j=1$, then
        $J\subset {\Gamma^1}$, and hence ${\Gamma^1}$ and $J$ meet by inclusion.
        If $j\neq 1$ then
        $ {\Gamma^1}$ and $ {J}$ meet by separation.
        Therefore $ {\Gamma^1}$ meets every other vertex of  $K''_{C,G}$. Hence, $K''_{C,G}$ is contractible, which proves the statement (i).

        Note that  $\Gamma^j=\widetilde\Gamma_G(C^j)$ for $j=1,\ldots,q$.
        If $q=1$ then $C=C^1$ and so (ii) is true.
        Suppose that $q\ge 2$.
        Note that each vertex $I$ of $K''_{C,G}$ corresponds to a vertex of $K''_{C^i,G}$ for some $i$.
        Take two vertices $I$ and $J$ of $K''_{C,G}$
        such that $I\subset  {\Gamma^i}$ and $J\subset  {\Gamma^j}$ for $i \neq j$. Then,
        $I$ and $J$ meet by separation, and so the simplex spanned by $I$ and $J$ in $K''_{C,G}$  corresponds to the simplex in $K''_{C^i,G}\ast K''_{C^j,G}$.
        Note that the join $A\ast B$ is homotopy equivalent to the (reduced) suspension of the smash product $A \wedge B$, and $\Sigma(A\wedge B)=S^1\wedge A\wedge B$. Hence, we have
        \begin{equation*}
            \begin{split}
                K''_{C,G}&=K''_{C^1,G}\ast \cdots \ast K''_{C^q,G}\\
                &\simeq\underbrace{S^1\wedge\cdots\wedge S^1}_{q-1}\wedge K''_{C^1,G}\wedge\cdots\wedge K''_{C^q,G}\\
                &=S^{q-1}\wedge K''_{C^1,G}\wedge\cdots\wedge K''_{C^q,G}.
            \end{split}
        \end{equation*}
        Therefore, we have
        $$\widetilde{H}_{i-1}(K''_{C,G};\Q)=\bigoplus_{\sum k_j=i-q}\bigotimes \widetilde{H}_{k_j}(K''_{C^j,G};\Q)$$
        as desired in (ii).
    \end{proof}

    Let $2^{\cC_G}_{\mathrm{even}\ast}$ be
    the set of collections $C$ in $2^{\cC_G}_{\mathrm{even}}$ such that the intersection of $C$ and each of the connected components of $\widetilde\Gamma_G(C)$ belongs to $2^{\cC_G}_{\mathrm{even}}$, that is,
    $$2^{\cC_G}_{\mathrm{even}\ast} = \{ C \in 2^{\cC_G}_{\mathrm{even}} \mid \text{each connected component of }\widetilde\Gamma_G(C)\text{ is even with respect to }C \}.$$
    Due to Lemma~\ref{thm:ignore_odd}, equation~\eqref{cor:main} is equivalent to
    \begin{equation}\label{cor:main1}
        \beta^i(M_G)=\sum_{C\in 2^{\cC_G}_{\mathrm{even}\ast}} \tilde{\beta}^{i-1}( K''_{C,G} ).
    \end{equation}

    Now, we obtain a pseudograph $\Gamma_G(C)$ from $\widetilde\Gamma_G(C)$ by replacing each bundle $B$ of $\widetilde\Gamma_G(C)$ satisfying $C\cap B=\emptyset$ by a (unlabelled) simple edge. Note that $\Gamma_G(C)$ is a partial underlying pseudograph of the induced subgraph $\widetilde\Gamma_G(C)$ of $G$, and hence $\Gamma_G(C)\lessdot G$.

     \begin{example}\label{ex3}
    Recall the pseudograph $G$ in Figure~\ref{fig:house graph}. Then $\Gamma_G(13ab)=123ab$ and $\Gamma_G(12cd)=124cd$. See Figure~\ref{fig:Gamma_G}.
        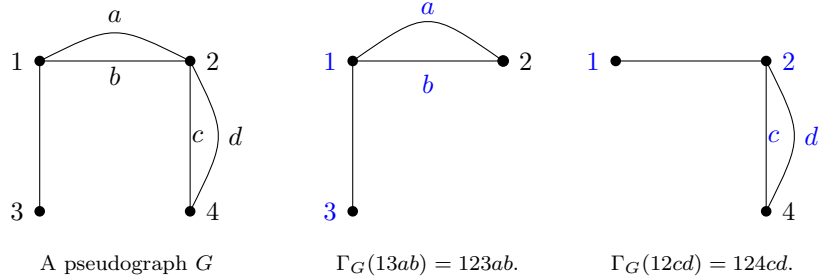
\begin{figure}[h]
        \begin{center}
        \begin{subfigure}[b]{.25\textwidth}
        \centering
        \begin{tikzpicture}
        	\fill (0,0) circle(2pt);
        	\fill (2,0) circle(2pt);
        	\fill (0,2) circle(2pt);
        	\fill (2,2) circle(2pt);
        	\draw (0,0)--(0,2)--(2,2)--(2,0);
        	\draw (0,2)..controls (1,2.5)..(2,2);
        	\draw (2,0)..controls (2.5,1)..(2,2);
        	\draw (-0.3,0) node{\footnotesize$3$};
        	\draw (2.3,0) node{\footnotesize$4$};
        	\draw (-0.3,2) node{\footnotesize$1$};
        	\draw (2.3,2) node{\footnotesize$2$};
        	\draw (1,2.6) node{\footnotesize$a$};
        	\draw (1,1.8) node{\footnotesize$b$};
        	\draw (2.1,1) node{\footnotesize$c$};
        	\draw (2.6,1) node{\footnotesize$d$};
        \end{tikzpicture}
        \caption*{\tiny A pseudograph $G$}
        \end{subfigure}
        \begin{subfigure}[b]{.25\textwidth}
        \centering
        \begin{tikzpicture}
        	\fill (0,0) circle(2pt);
        	\fill (0,2) circle(2pt);
        	\draw (0,0)--(0,2)--(2,2);
        	\draw (0,2)..controls (1,2.7)..(2,2);
        	\draw (-0.3,0) node{\blue{\footnotesize$3$}};
        	\draw (-0.3,2) node{\blue{\footnotesize$1$}};
        	\draw (2.3,2) node{\footnotesize$2$};
        	\draw (1,2.7) node{\blue{\footnotesize$a$}};
        	\draw (1,1.7) node{\blue{\footnotesize$b$}};	
            \filldraw[fill=black] (2,2) circle(2pt);
        \end{tikzpicture}
        \caption*{\tiny   $\Gamma_G(13ab)=123ab$.}
          \end{subfigure}
        \hspace{-0.5cm}
        \begin{subfigure}[b]{.25\textwidth}
        \centering
        \begin{tikzpicture}
        	\fill (2,0) circle(2pt);
        	\fill (0,2) circle(2pt);
        	\fill (2,2) circle(2pt);
        	\draw  (2,0)--(2,2)--(0,2);
        	\draw (2,0)..controls (2.5,1)..(2,2);
        	\draw (2.3,0) node{\footnotesize$4$};
        	\draw (-0.3,2) node{\blue{\footnotesize$1$}};
        	\draw (2.3,2) node{\blue{\footnotesize$2$}};
        	\draw (2.1,1) node{\blue{\footnotesize$c$}};
        	\draw (2.6,1) node{\blue{\footnotesize$d$}};
        \end{tikzpicture}
        \caption*{\tiny $\Gamma_G(12cd)=124cd$.}
        \end{subfigure}
        \hspace{-0.2cm}
        \caption{Examples of $\Gamma_G$} \label{fig:Gamma_G}
        \end{center}
        \end{figure}
    \end{example}

    \begin{proposition}\label{prop:LC}
        For $C\in 2^{\cC_G}_{\mathrm{even}\ast}$, $K^{\mathrm{odd}}_{C,G}$ is homotopy equivalent to $K^{\mathrm{odd}}_{C,\Gamma_G(C)}$, and hence, the $i$th rational Betti number of $M_G$ is
        \begin{equation}\label{cor:main2}
            \beta^i(M_G)=\sum_{C\in 2^{\cC_G}_{\mathrm{even}\ast}} \tilde{\beta}^{i-1}(K^{\mathrm{odd}}_{C,\Gamma_G(C)}).
        \end{equation}
    \end{proposition}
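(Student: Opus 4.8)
The plan is to combine the two homotopy reductions already established for $G$ with the \emph{same} reductions applied to the smaller pseudograph $\Gamma_G(C)$, and to glue them through a purely combinatorial identification of the reduced complexes. Concretely, Lemmas~\ref{thm:KT} and \ref{lem:LCodd} applied to $G$ give $K^{\mathrm{odd}}_{C,G}\simeq K'_{C,G}\simeq K''_{C,G}$, so it suffices to prove $K''_{C,G}\simeq K^{\mathrm{odd}}_{C,\Gamma_G(C)}$; the Betti number formula \eqref{cor:main2} is then immediate from \eqref{cor:main1} by replacing each summand $\tilde\beta^{i-1}(K''_{C,G})$ with $\tilde\beta^{i-1}(K^{\mathrm{odd}}_{C,\Gamma_G(C)})$.

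First I would record two structural facts about $\Gamma_G(C)$. Since every node of $C$ and every endpoint of an edge of $C$ already lies in $V_C$, the node set of $\Gamma_G(C)$ is exactly $V_C$, whence $\widetilde\Gamma_{\Gamma_G(C)}(C)=\Gamma_G(C)$; moreover, by construction $\Gamma_G(C)$ contains no bundle $B$ with $B\cap C=\emptyset$. Hence the defining condition of $K''$ is vacuous for $\Gamma_G(C)$, so that $K''_{C,\Gamma_G(C)}=K'_{C,\Gamma_G(C)}$. Applying Lemma~\ref{thm:KT} to $\Gamma_G(C)$ (after checking $C\in 2^{\cC_{\Gamma_G(C)}}_{\mathrm{even}}$, which holds because $|C\cap V|$ and every $|C\cap B|$ for a surviving bundle $B$ are even) then yields $K^{\mathrm{odd}}_{C,\Gamma_G(C)}\simeq K'_{C,\Gamma_G(C)}=K''_{C,\Gamma_G(C)}$.

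The heart of the argument is the simplicial isomorphism $K''_{C,G}\cong K''_{C,\Gamma_G(C)}$. I would define the forgetful map $\Phi$ sending a tube $I$ of $G$ to the subgraph $\Phi(I)$ of $\Gamma_G(C)$ obtained by contracting every full $C$-free bundle recorded in $I$ to its corresponding simple edge of $\Gamma_G(C)$, and exhibit its inverse, which lifts a tube $I'$ of $\Gamma_G(C)$ to $G$ by reinstating the \emph{full} bundle on every $C$-free edge both of whose endpoints lie in $I'$. Because a $C$-free bundle contributes nothing to $C$, we have $|\Phi(I)\cap C|=|I\cap C|$, so $\Phi$ respects the oddness condition; and because replacing a bundle by a simple edge alters neither adjacency nor the node sets, $\Phi$ preserves the relations \emph{meet by inclusion} and \emph{meet by separation}, hence is a simplicial isomorphism. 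It is precisely here that the hypothesis $C\in 2^{\cC_G}_{\mathrm{even}\ast}$ enters: each connected component of $\widetilde\Gamma_G(C)$ is even with respect to $C$, so $\widetilde\Gamma_G(C)$ itself (and each component) has even intersection with $C$ and is therefore never a vertex of $K'_{C,G}$; this guarantees that every vertex of $K''_{C,G}$ is a \emph{proper} connected subgraph, matching a genuine tube of $\Gamma_G(C)$ under $\Phi$.

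I expect the main obstacle to be the bookkeeping in this last step when $\widetilde\Gamma_G(C)$ (equivalently $\Gamma_G(C)$) is disconnected, since Lemmas~\ref{thm:KT} and \ref{lem:LCodd} are phrased for connected pseudographs. I would dispose of this either by observing that the proofs of those lemmas are local arguments about tubes and their links, valid verbatim for a disconnected ambient pseudograph, or, more safely, by decomposing along the connected components $\Gamma^1,\ldots,\Gamma^q$ of $\Gamma_G(C)$: writing $C^j=C\cap\Gamma^j$, Lemma~\ref{thm:ignore_odd}(ii) expresses each of $K''_{C,G}$ and $K^{\mathrm{odd}}_{C,\Gamma_G(C)}$ as a simplicial join over the components, so the equivalence follows by applying the connected case to each $\Gamma^j$ and taking joins.
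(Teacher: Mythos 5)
Your proposal is correct and follows essentially the same route as the paper: both reduce $K^{\mathrm{odd}}_{C,G}$ to $K''_{C,G}$ via Lemmas~\ref{thm:KT} and~\ref{lem:LCodd} and then identify $K''_{C,G}$ with $K^{\mathrm{odd}}_{C,\Gamma_G(C)}$ through the bundle-contraction/reinstatement bijection, invoking $C\in 2^{\cC_G}_{\mathrm{even}\ast}$ exactly as you do, to ensure every odd tube is a proper subgraph of a component of $\widetilde\Gamma_G(C)$ and hence a genuine tube of $\Gamma_G(C)$. The only cosmetic difference is that the paper routes the identification through $K''_{C,\widetilde\Gamma_G(C)}$ instead of observing directly that $K''_{C,\Gamma_G(C)}=K'_{C,\Gamma_G(C)}=K^{\mathrm{odd}}_{C,\Gamma_G(C)}$.
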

    \begin{proof}
    By equation~\eqref{cor:main1}, it is sufficient to show that $K''_{C,G}$ and $K^{\mathrm{odd}}_{C,\Gamma_G(C)}$ are the same for $C\in 2^{\cC_G}_{\mathrm{even}\ast}$. Figure~\ref{diagram:prop} shows a diagram of this proof.

    \begin{figure}[h]
    \begin{tikzpicture}[scale=1]
    \draw (0,0) node{$K^{\mathrm{odd}}_{C,\Gamma_G(C)}$} (5,0) node{$K''_{C,\widetilde\Gamma_G(C)}$} (8,0) node{$K''_{C,G}$} (0,2) node{$K^{\mathrm{odd}}_{C,\widetilde\Gamma_G(C)}$} (5,2) node{$K'_{C,\widetilde\Gamma_G(C)}$} (8,2) node{$K'_{C,G}$};
    \draw[<->] (1,0)--(4,0);
    \draw (5,1) node{$\cup$};
    \draw (8,1) node{$\cup$};
    \draw (2.5,2) node{$=$} (6.5,2) node{$=$} (6.5,0) node{$=$};
    \draw (2.5,0) node[above]{\footnotesize$\exists$ a bijection $f$} (2.5,0) node[below]{\tiny\text{preserving faces}};
    \end{tikzpicture}
    \caption{A diagram for the proof of Proposition~\ref{prop:LC}, where $C\in 2^{\cC_G}_{\mathrm{even}\ast}$}\label{diagram:prop}
    \end{figure}

    We first claim that $K'_{C,G}$ is equal to $K^{\mathrm{odd}}_{C,\widetilde\Gamma_G(C)}$ for $C\in 2^{\cC_G}_{\mathrm{even}\ast}$. Recall that the vertices of $K'_{C,G}$ are tubes $I$ of $G$ such that $I$ is odd with respect to $C$ and $I\subseteq \widetilde\Gamma_G(C)$. By the connectedness of a tube, if $I$ is a vertex of $K'_{C,G}$, then $I$ is contained in some connected component of $\widetilde\Gamma_G(C)$ and $I$ is odd with respect to~$C$. If $C\in 2^{\cC_G}_{\mathrm{even}\ast}$, each connected component of $\widetilde\Gamma_G(C)$ is even with respect to~$C$ by definition. Hence, $I$ is properly contained in some connected component of $\widetilde\Gamma_G(C)$, which implies that $I$ is also a tube of $\widetilde\Gamma_G(C)$ such that $I$ is odd with respect to $C$. Therefore, $I$ is also a vertex of $K^{\mathrm{odd}}_{C,\widetilde\Gamma_G(C)}$, which implies that $K'_{C,G}$ is a subcomplex of $K^{\mathrm{odd}}_{C,\widetilde\Gamma_G(C)}$. Since $\widetilde\Gamma_G(C)$ is an induced subgraph of $G$, if $I$ is a tube of $\widetilde\Gamma_G(C)$ such that $I$ is odd with respect to~$C$, then it is also a tube of $G$ such that $I\subsetneq\widetilde\Gamma_G(C)$ and $I$ is odd with respect to~$C$. Therefore, every vertex of $K^{\mathrm{odd}}_{C,\widetilde\Gamma_G(C)}$ is also a vertex of $K'_{C,G}$, which implies that $K^{\mathrm{odd}}_{C,\widetilde\Gamma_G(C)}$ is also a subcomplex of $K'_{C,G}$. This proves the claim.

    Note that we obtain $K''_{C,G}$ (respectively $K''_{C,\widetilde\Gamma_G(C)}$) from $K'_{C,G}$ (respectively, $K'_{C,\widetilde\Gamma_G(C)}$) by removing the stars of the vertices $I$ if there exists a bundle $B$ of $\widetilde\Gamma_G(C)$ such that $B\cap C=\emptyset$, $I$ contains the endpoints of $B$, and $B\not\subset I$. Since $K'_{C,G}=K^{\mathrm{odd}}_{C,\widetilde\Gamma_G(C)}=K'_{C,\widetilde\Gamma_G(C)}$ for $C\in 2^{\cC_G}_{\mathrm{even}\ast}$, we can see that $K''_{C,G}=K''_{C,\widetilde\Gamma_G(C)}$.

    From a tube $I$ of $\Gamma_G(C)$, we can find a tube $\widetilde{I}$ of $\widetilde\Gamma_G(C)$ such that $I$ is a partial underlying pseudograph of $\widetilde{I}$ as follows; for each bundle $B$ of $\widetilde\Gamma_G(C)$ satisfying $B\cap C=\emptyset$, whenever $I$ has an unlabelled simple edge $e$ whose endpoints are the same with the endpoints of $B$, we transform such a simple edge $e$ into a bundle $B$. Then $|I\cap C|\equiv |\widetilde{I}\cap C|\pmod{2}$, and hence if $I$ is a vertex of $K^{\mathrm{odd}}_{C,\Gamma_G(C)}$, then $\widetilde{I}$ is also a vertex of $K''_{C,\widetilde\Gamma_G(C)}$. Now we define a map $f$ from the vertex set of $K^{\mathrm{odd}}_{C,\Gamma_G(C)}$ to the vertex set of $K''_{C,\widetilde\Gamma_G(C)}$ by $I\mapsto \widetilde{I}$. Let us show that $f$ is bijective and preserves faces.

    \underline{(i) $f$ is one-to-one.}
    If $I$ and $I'$ are different vertices of $K^{\mathrm{odd}}_{C,\Gamma_G(C)}$, then they have different node sets or different labelled edges. From the definition of the map $f$, $\widetilde{I}$ and $\widetilde{I'}$ also have different node sets or different labelled edges. Hence, $f(I)$ and $f(I')$ are different vertices of $K''_{C,\widetilde\Gamma_G(C)}$.

    \underline{(ii) $f$ is surjective.}
    Let $J$ be a vertex of $K''_{C,\widetilde\Gamma_G(C)}$. If $J$ does not have a bundle $B$ of $\widetilde\Gamma_G(C)$ satisfying $B\cap C=\emptyset$, then $J$ is also an odd tube of $\Gamma_G(C)$ with respect to $C$, which implies that $f(J)=J$. If $J$ has an bundle $B$ of $\widetilde\Gamma_G(C)$ satisfying $B\cap C=\emptyset$, then we can obtain an odd tube $I$ of $\Gamma_G(C)$ with respect to~$C$ from $J$ by transforming such bundles $B$ into unlabelled simple edges. Furthermore, $\widetilde{I}=J$, that is, $f(I)=J$.

    \underline{(iii) $f$ preserves faces.} Take two vertices $I$ and $I'$ in $K^{\mathrm{odd}}_{C,\Gamma_G(C)}$. Since $I$ and $\widetilde{I}$ have the same node set, if $I$ and $I'$ meet by separation, then $\widetilde{I}$ and $\widetilde{I'}$ also meet by separation clearly. Now assume that $I$ and $I'$ meet by inclusion. Without loss of generality, we may further assume that $I\subseteq I'$. Hence, if $I$ has an unlabelled simple edge $e$ whose endpoints are the same with the endpoints of $B$ for some bundle $B$ of $\widetilde\Gamma_G(C)$ satisfying $B\cap C=\emptyset$, then $I'$ also has the edge $e$. Hence, $\widetilde{I}\subseteq \widetilde{I'}$. Thus, if the vertices $I_1,\ldots, I_k$ form a tubing in $\Gamma_G(C)$, then $\widetilde{I_1},\ldots,\widetilde{I_k}$ also form a tubing in $\widetilde\Gamma_G(C)$. Consequently, if the vertices $I_1,\ldots,I_k$ form a simplex in $K^{\mathrm{odd}}_{C,\Gamma_G(C)}$, then $f(I_1),\ldots, f(I_k)$ also form a simplex in $K''_{C,\widetilde\Gamma_G(C)}$.

    Therefore, $K^{\mathrm{odd}}_{C,\Gamma_G(C)}$ is isomorphic to $K''_{C,\widetilde\Gamma_G(C)}$.

    \end{proof}

    \begin{remark} The simplicial subcomplex $K_{C,\Gamma_G(C)}^{\mathrm{odd}}$ is an optimal simplicial subcomplex of $K_{C,G}^{\mathrm{odd}}$ having the same homotopy type, in a sense that $K_{C,H}^{\mathrm{odd}}$ might not have the same homotopy type with  $K_{C,G}^{\mathrm{odd}}$ when $H$ is a proper subgraph of $\Gamma_{G}(C)$.
    Let $G$ be a complete graph with node set $\{1,2,3,4\}$ and one bundle $B=\{a,b,c\}$, where the endpoints of $B$ are $1$ and $2$.
    Then $\Gamma_G(1234ab)=G$. Let us consider a semi-induced subgraph $H$ of $G$ which has four nodes $\{1,2,3,4\}$ and a bundle $B'=\{a,b\}$. Then the simplicial complex $K_{C,H}^{\mathrm{odd}}$ is a simplicial subcomplex of $K_{C,G}^{\mathrm{odd}}$.
    The simplicial complex $K_{C,G}^{\mathrm{odd}}$ (respectively, $K_{C,H}^{\mathrm{odd}}$) is the order complex of a poset consisting of odd tubes of $G$ (respectively, $H$) with respect to $C$, ordered by inclusion. We can compute the reduced Euler characteristic of the order complex of a poset by computing the M\"{o}bius function of the poset. Then one can see that
    the reduced Euler characteristics of $K_{C,G}^{\mathrm{odd}}$ and $K_{C,H}^{\mathrm{odd}}$ are $5$ and $1$, respectively. Hence,
    $K_{C,G}^{\mathrm{odd}}$ is not homotopy equivalent to $K_{C,H}^{\mathrm{odd}}$.
    \end{remark}

    Recall that a collection $C\subset \cC_G$ is admissible to a connected pseudograph $G$ if it satisfies the following (a1)$\sim$(a3):
    \begin{itemize}
      \item[(a1)] $C\in 2^{\cC_G}_{\mathrm{even}}$,
      \item[(a2)] $C$ contains the nodes which are not endpoints of any bundle of $G$, and
      \item[(a3)] for each bundle $B$ of $G$,  $B\cap C\neq \emptyset$.
    \end{itemize}
    If $G$ has $q$ connected components $G^1$, \ldots, $G^q$, then $C\subset \cC_G$ is admissible to $G$ if $C\cap G^i$ is admissible to $G^i$ for $i=1,\ldots,q$.

    \begin{lemma}\label{lem:admissible}
        A collection $C$ belongs to $2^{\cC_G}_{\mathrm{even}\ast}$ if and only if $C$ is admissible to $\Gamma_G(C)$.
    \end{lemma}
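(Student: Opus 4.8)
The plan is to unwind both sides into parity conditions on the restriction of $C$ to nodes and to each bundle, and to observe that two of the three admissibility requirements are forced by the very construction of $\Gamma_G(C)$. First I would record the combinatorial structure of $\Gamma_G(C)$: its node set is $V_C$; its bundles are precisely the bundles $B_i$ of $G$ with $B_i\cap C\neq\emptyset$ (any such $B_i$ has both endpoints in $V_C$, so it survives as a bundle of the induced graph $\widetilde\Gamma_G(C)$ and is not flattened into a simple edge); and, since replacing a bundle by a simple edge preserves adjacency, its connected components are exactly $\Gamma^1,\dots,\Gamma^q$, the components of $\widetilde\Gamma_G(C)$, with the same node sets. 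In particular every element of $C$ is either a node of $V_C$ or a multiple edge of a bundle of $\Gamma_G(C)$, so $C\subseteq\cC_{\Gamma_G(C)}$ and it makes sense to test admissibility of $C$ (equivalently, by definition, of each $C^j=C\cap\Gamma^j$) against $\Gamma_G(C)$.

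Next I would show that conditions (a2) and (a3) hold automatically for $\Gamma_G(C)$, for every $C\in 2^{\cC_G}$. Condition (a3) is immediate, since by the previous paragraph each bundle of $\Gamma_G(C)$ is a bundle of $G$ that meets $C$. For (a2), suppose $v$ is a node of $\Gamma_G(C)$ that is not an endpoint of any of its bundles; since $v\in V_C$, either $v\in C$, and we are done, or $v$ is an endpoint of some edge $e\in C$. In the latter case $e$ lies in a bundle $B$ with $B\cap C\neq\emptyset$, so $B$ is a bundle of $\Gamma_G(C)$ having $v$ as an endpoint, contradicting the choice of $v$. Hence $v\in C$, which is exactly (a2).

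It therefore remains to show that condition (a1) for every component of $\Gamma_G(C)$ --- that is, $|C^j\cap V|$ is even and $|C^j\cap B|$ is even for every bundle $B$ of $\Gamma_G(C)$ lying in $\Gamma^j$, for all $j$ --- is equivalent to $C\in 2^{\cC_G}_{\mathrm{even}\ast}$. This is pure parity bookkeeping based on the disjoint decomposition $C^j=(C^j\cap V)\sqcup\bigsqcup_{B}(C^j\cap B)$, where $B$ ranges over the bundles of $\Gamma^j$ meeting $C$ (every edge of $C^j$ lies in such a bundle, as it is a multiple edge of $G$ whose whole bundle sits inside the induced graph $\Gamma^j$ and meets $C$). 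Summing over $j$ gives $|C\cap V|=\sum_j|C^j\cap V|$ and $|C\cap B_i|=|C^j\cap B_i|$ for the unique $j$ whose component contains $B_i$ (and $|C\cap B_i|=0$ otherwise), while within a fixed $j$ one has $|C^j|=|C^j\cap V|+\sum_B|C^j\cap B|$. Reading these identities modulo $2$ shows that the family of conditions \{$|C^j\cap V|$ even, and $|C^j\cap B|$ even for all surviving $B$\} ranging over all $j$ is equivalent to the conjunction of \{$|C\cap V|$ and every $|C\cap B_i|$ even, i.e. $C\in 2^{\cC_G}_{\mathrm{even}}$\} with \{each $|C^j|$ even, i.e. each component of $\widetilde\Gamma_G(C)$ is even with respect to $C$\}, which is precisely the definition of $2^{\cC_G}_{\mathrm{even}\ast}$. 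Combined with the automatic validity of (a2) and (a3), this yields both implications.

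The only genuinely delicate point is condition (a2): one must exploit that the node set of $\Gamma_G(C)$ is exactly $V_C$, together with the fact that a bundle is flattened precisely when it misses $C$, so that a node of $V_C$ lying outside $C$ is necessarily pinned down as an endpoint of a bundle that survives in $\Gamma_G(C)$. Everything else reduces to the single modulo-$2$ computation above, which I would present through the displayed counting identities rather than by case analysis.
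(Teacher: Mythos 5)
Your proposal is correct and follows essentially the same route as the paper's proof: both verify (a2) and (a3) directly from the construction of $\Gamma_G(C)$ (a node of $V_C$ outside $C$ must be an endpoint of a surviving bundle, and every bundle of $\Gamma_G(C)$ meets $C$ by construction), and both reduce the rest to condition (a1) on each connected component. The only difference is presentational: you spell out the mod-$2$ counting identities relating $|C^j\cap V|$, $|C^j\cap B|$, and $|C^j|$ that the paper compresses into ``by the definition of $2^{\cC_G}_{\mathrm{even}\ast}$'' and ``clear from (a1),'' which makes the backward implication more transparent but is not a different argument.
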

    \begin{proof}
        Let us show the forward direction. Assume that  $C\in  2^{{\cC}_G}_{\mathrm{even}\ast}$.
        Put $H=\Gamma_G(C)$, and $H^1$, \ldots, $H^q$ are the connected components of $H$.
        Let $C^i=C\cap H^i$ for each $i=1,\ldots,q$.
        Then $\Gamma_G(C^i)=H^i$.
        Hence, it is enough to show that each $C^i$ is admissible to $H^i$.
        By the definition of $2^{{\cC}_G}_{\mathrm{even}\ast}$, each $C^i$ belongs to $2^{\cC_{H^i}}_{\mathrm{even}}$, and so (a1) holds.
        If $v$ is a node in $H^i\setminus C$, then by definition of $\widetilde\Gamma_G(C)$, $v$ is an endpoint of some bundle of $G$, and so (a2) holds.
        For any bundle $B$ of $H^i$, by definitions of $\widetilde\Gamma_G(C)$ and $\Gamma_G(C)$, $B\cap C\neq \emptyset$, and so (a3) holds.
        Hence, $C^i$ is admissible to $H^i$ as desired.

        The backward direction is clear from (a1) in the definition of admissibility.
    \end{proof}

    By the above lemma, for $C\in 2^{\cC_G}_{\mathrm{even}\ast}$, $\Gamma_G(C)$ has an admissible collection $C$. The lemma below says that the converse also holds.
    \begin{lemma}\label{lem:gamma}
        If $H\lessdot G$ and a collection $C$ is admissible to $H$, then  $H=\Gamma_G(C)$.
    \end{lemma}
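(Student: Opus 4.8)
The plan is to prove $H = \Gamma_G(C)$ by showing that $H$ and $\Gamma_G(C)$ are both partial underlying pseudographs of one and the same induced subgraph of $G$, namely $\widetilde\Gamma_G(C)$, and that they retain exactly the same bundles. Since a partial underlying pseudograph of a fixed pseudograph is uniquely determined by the set of bundles it keeps (each replaced bundle becomes the canonical simple edge on its two endpoints, and all other simple edges are unchanged), matching the node sets together with the kept bundles forces $H = \Gamma_G(C)$.

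First I would pin down the node set. Because $C$ is admissible to $H$, we have $C \subseteq \cC_H$, so every element of $C$ is either a node of $H$ or a multiple edge of $H$; in particular the endpoints of any edge of $C$ are nodes of $H$, which gives $V_C \subseteq V(H)$. For the reverse inclusion, take any node $v$ of $H$. If $v$ is not an endpoint of any bundle of $H$, then (a2) gives $v \in C$, hence $v \in V_C$; if $v$ is an endpoint of some bundle $B$ of $H$, then (a3) supplies an edge $e \in B \cap C$ having $v$ as an endpoint, so again $v \in V_C$. Thus $V_C = V(H)$, and therefore $\widetilde\Gamma_G(C) = G[V(H)]$. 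Since $H \lessdot G$ and $H$ has node set $V(H)$, the induced subgraph of $G$ witnessing $H \lessdot G$ must be exactly $G[V(H)] = \widetilde\Gamma_G(C)$, so $H$ is a partial underlying pseudograph of $\widetilde\Gamma_G(C)$, just as $\Gamma_G(C)$ is by construction.

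Next I would match the bundles, viewing both ``bundle of $H$'' and ``bundle of $\Gamma_G(C)$'' as selecting a subset of the bundles of $\widetilde\Gamma_G(C)$. A bundle $B$ of $\widetilde\Gamma_G(C)$ is a bundle of $\Gamma_G(C)$ precisely when $C \cap B \neq \emptyset$. On one hand, if $B$ is a bundle of $H$, then (a3) yields $C \cap B \neq \emptyset$, so $B$ survives in $\Gamma_G(C)$. On the other hand, if $C \cap B \neq \emptyset$, pick $e \in C \cap B$; since $C \subseteq \cC_H$, the edge $e$ is a genuine multiple edge of $H$, which forces its bundle $B$ to be kept (not replaced by a simple edge) when forming $H$, so $B$ is a bundle of $H$. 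Hence $H$ and $\Gamma_G(C)$ retain exactly the same bundles of $\widetilde\Gamma_G(C)$, and combined with the equality of node sets this yields $H = \Gamma_G(C)$.

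The step I expect to need the most care is the backward direction of the bundle matching: translating ``$C \cap B \neq \emptyset$'' into ``$B$ is not replaced in $H$.'' This is where the hypothesis $C \subseteq \cC_H$ does the real work—an element of $C$ lying in a bundle $B$ must itself be a multiple edge of $H$, which can happen only if $B$ has not been collapsed to a simple edge in passing to $H$. Making this implication airtight, and carefully distinguishing edges of $C$ that sit in surviving bundles of $H$ from simple edges that arose by replacement, is the crux; by contrast the node-set computation and the forward bundle inclusion via (a2) and (a3) are comparatively routine.
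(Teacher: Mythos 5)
Your proof is correct and follows essentially the same route as the paper's: both arguments use (a2) and (a3) to identify the node set of $H$ with $V_C$ (the node set of $\widetilde\Gamma_G(C)$), and then use (a3) together with $C\subseteq\cC_H$ and the definition of a partial underlying pseudograph to show that the bundles of $H$ are exactly the bundles of $G$ meeting $C$, i.e.\ the bundles of $\Gamma_G(C)$. Your write-up simply spells out in more detail the two inclusions and the backward bundle-matching step that the paper's proof leaves implicit.
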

    \begin{proof}
        For a node $v$ of $H$, by (a2) and (a3),
        either $v\in C$ or $v$ is an endpoint of some bundle $B$ of $H$ such that $B\cap C\neq \emptyset$, which is exactly equal to the node set of $\Gamma_G(C)$.
        In addition, by (a3) and the definition of a partial underlying pseudograph, the bundles of $H$ are the bundles of $G$ having a nonempty intersection with $C$, which exactly coincide with the bundles of $\Gamma_G(C)$. Therefore, $H =\Gamma_G(C)$.
      \end{proof}

    Now we are ready to give a proof of our main result, Theorem~\ref{thm:main}.

    \begin{proof}[Proof of Theorem~\ref{thm:main}]
        For a connected pseudograph $G$, equation~\eqref{cor:main2} in Proposition~\ref{prop:LC} implies
        \begin{eqnarray*}
        \PP_{M_G}(t) &=&1+t \sum_{C\in 2^{{\cC}_G}_{\mathrm{even}\ast}} \widetilde{\PP}_{K_{C,\Gamma_G(C)}^{\mathrm{odd}}}(t).
        \end{eqnarray*}
        By Lemmas~\ref{lem:admissible} and~\ref{lem:gamma},
        \begin{equation*}
        \begin{array}{rcl}
           2^{{\cC}_G}_{\mathrm{even}\ast}&=&\{C\subset\mathcal{C}_G\mid C\text{ is admissible to }\Gamma_G(C)\}\\
           ~&=& \dot{\cup}_{H\lessdot G}\{C\subset\mathcal{C}_H\mid C\text{ is admissible to }H\},
        \end{array}
        \end{equation*}
        where $\dot{\cup}$ means the disjoint union.
        Thus we have
        \[
        \PP_{M_G}(t) =1+t\sum_{H\lessdot G}\sum_{C\subset\cC_{H}\atop\text{admissible to }H}\hspace{-0.3cm}\widetilde{\PP}_{K_{C,H}^{\mathrm{odd}}}(t)=1+t \sum_{H\lessdot G}a_H(t).
        \]
        If $G$ has $q$ connected components $G^1$, $\ldots$, $G^q$, then $P_G=P_{G^1}\times \cdots\times P_{G^q}$ and hence $M_{G}=M_{G^1}\times\cdots\times M_{G^q}$. Therefore,  
        \begin{eqnarray*}
            \PP_{M_G}(t)&=&\prod_{i=1}^q \PP_{M_{G_i}}(t)=\prod_{i=1}^q\left(1+t\sum_{H^i\lessdot G^i}a_{H^i}(t)\right)=1+t^q\sum_{H^1\lessdot G^1,\ldots,H^q\lessdot G^q}\prod_{i=1}^q a_{H^i}(t).
        \end{eqnarray*}
        By Lemma~\ref{thm:ignore_odd},
        \begin{eqnarray*}
                \PP_{M_G}(t)
        =1+t\sum_{H^1\lessdot G^1,\ldots,H^q\lessdot G^q} a_{H^1\sqcup\cdots\sqcup H^q}(t)=1+t\sum_{H\lessdot G}a_H(t).
        \end{eqnarray*}
    \end{proof}

\section{Further remarks on shellability} \label{sec6:remark}

    So far, we have discussed how to compute the rational Betti numbers of the real toric manifold corresponding to a pseudograph associahedron. The amount of the computation of equation~\eqref{eq:formula} can be reduced for the pseudograph associahedron associated with a pseudograph $G$, since we showed that it is sufficient to consider $K_{C,H}^{\mathrm{odd}}$ for only admissible collections $C\in 2^{\cC_G}_{\mathrm{even}\ast}$ of the pseudographs $H\lessdot G$,
    instead of considering  $P_{C,G}^{\mathrm{odd}}$ for all $C\in 2^{\cC_G}_{\mathrm{even}}$.
    Even though our results help to recognize the homotopy type of $P_{C,G}^{\mathrm{odd}}$ by using $K_{C,H}^{\mathrm{odd}}$ for some special pseudograph $H$, however, we do not have the rich information of the homotopy type of $K_{C,H}^{\mathrm{odd}}$ in general.

    We note that it is shown in~\cite{CP} that if $G$ is a simple graph then for any $C\in 2^{\cC_G}_{\mathrm{even}}$, $K_{C,G}^{\mathrm{odd}}$ and its complement $K_{C,G}^{\mathrm{even}}$ are homotopy equivalent to the order complexes of some posets, respectively, and then they showed that $K_{C,G}^{\mathrm{odd}}$ is homotopy equivalent to a wedge of spheres of the same dimension by showing that the corresponding order complex of  $K_{C,G}^{\mathrm{even}}$ is pure and shellable. Therefore it is natural to ask if similar phenomena occur for pseudographs. The former part, the order complex, is naturally extended to pseudographs, but the latter part, the shellability, is not.

\subsection*{Order complex}
    Let $G$ be a connected pseudograph, and let a collection $C\subset\cC_G$ be admissible to $G$.
    We will see that $K_{C,G}^{\mathrm{odd}}$ is homotopy equivalent to the order complex of the poset $S_{C,G}^{\mathrm{odd}}$ defined in the following.
    Let $S_{C,G}^{\mathrm{odd}}$ (respectively, $S_{C,G}^{\mathrm{even}}$) be the set of all subgraphs $I$ of $G$ such that each connected component of $I$ is an odd (respectively, even) tube with respect to $C$, excluding both $\emptyset$ and $C$.
    For example, consider an admissible collection $C_2$ in Figure~\ref{fig:admissible}.
    Then the vertices of $K_{C_2,G}^{\mathrm{odd}}$ are $1,3, 12ab, 123a, 123b$ and the elements of
    $S_{C_2,G}^{\mathrm{odd}}$ are $1, 3, 12ab, 123a, 123b, 13$.
    Figure~\ref{fig:KS2} shows the order complex of $S_{C_2,G}^{\mathrm{odd}}$.
    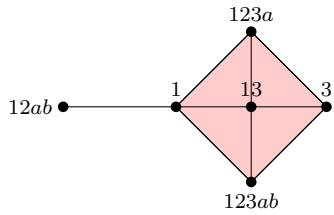
\begin{figure}[h]
    \centering
    \begin{tikzpicture}
        \path
        (0,0) coordinate (2)  (1,0) coordinate (13)  (2,0) coordinate (3)    (-1.5,0) coordinate (12ab)  (1,1) coordinate (123a)   (1,-1) coordinate (123b);
        \draw[fill=red!20]  (2)--(123a)--(3)--(123b)--cycle;
        \path (2) edge (12ab);
        \path (3) edge (123a);
        \path (3) edge (123b);
        \path (2) edge (123b);
        \path (2) edge (123a);
        \path (2) edge (3);
        \path (13) edge (123a) edge (123b);
        \fill (2) node[above]{\tiny$1$}
              (3) node[above]{\tiny$3$}
              (123a) node[above]{\tiny $123a$}
              (123b) node[below]{\tiny $123ab$}
              (13) node[above]{\tiny $13$}
              (12ab) node[left]{\tiny $12ab$};
        \fill (2) circle (2pt) (3) circle (2pt)   (12ab) circle (2pt)(123b) circle (2pt) (123a) circle (2pt) (13) circle (2pt);
    \end{tikzpicture}
    \caption{The order complex of $S_{C_2,G}^{\mathrm{odd}}$ for an admissible collection $C_2=13ab$}\label{fig:KS2}
    \end{figure}

    Then we can observe that when $C$ is an admissible collection of $G$,
    the order complex of $S_{C,G}^{\mathrm{odd}}$ is a geometric subdivision of $K^{\mathrm{odd}}_{C,G}$,
    because the following are true for a vertex $I$ of the order complex  of $S_{C,G}^{\mathrm{odd}}$.
    \begin{itemize}
        \item[(i)] If $I$ is connected, then $I$ is also a vertex of $K^{\mathrm{odd}}_{C,G}$.
        \item[(ii)] If $I$ consists of connected components ${I}^1,\ldots,{I}^\ell$, then ${I}^1,\ldots,{I}^\ell$ are vertices of $K^{\mathrm{odd}}_{C,G}$ and they meet by separation.
    \end{itemize}
    Since a simplicial complex and its geometric subdivision are homotopy equivalent, $K_{C,G}^{\mathrm{odd}}$ and the order complex of $S_{C,G}^{\mathrm{odd}}$ are homotopy equivalent.
    Hence,
    it follows that
    \begin{equation*}
        a_G(t)=\sum_{C\subset\cC_G\atop\text{admissible to }G} \PP_{K_{C,G}^{\mathrm{odd}}}(t)=\sum_{C\subset\cC_G\atop\text{admissible to }G} \PP_{S_{C,G}^{\mathrm{odd}}}(t).
    \end{equation*}
    We note that this gives another definition of the $a$-polynomial, which might be more useful to compute the $a$-polynomial when $S_{C,G}^{\mathrm{odd}}$'s are shellable.

    The order complex  of $S_{C,G}^{\mathrm{even}}$ is a geometric subdivision of $K_{C,G}^{\mathrm{even}}$ if we define $K_{C,G}^{\mathrm{even}}$  by the dual simplicial complex of the union of facets corresponding to the tubes of $G$ that are  even with respect to $C$.
    Hence, since the rational Betti numbers of $K^{\mathrm{odd}}_{C,G}$ are obtained by computing the rational Betti numbers of  $K^{\mathrm{even}}_{C,G}$   by Alexander duality, one might obtain $a_G(t)$ by computing $\PP_{K_{C,G}^{\mathrm{even}}}(t)$ or $\PP_{S_{C,G}^{\mathrm{even}}}(t)$.

\subsection*{Shellability}
    A simplicial complex $K$ is \emph{shellable} if its facets can be arranged in linear order $F_1, \ldots ,F_t$ in such a way that the subcomplex $(\sum_{i=1}^{k-1} \overline{F_i})\cap\overline{F_k}$ is pure and $(\dim {F_k}-1)$-dimensional for all $k = 2, \ldots, t$. For details, see~\cite{BW1996}.

    As we mentioned above, it is shown in~\cite{CP} that if $G$ is a simple graph then for any $C\in 2^{\cC_G}_{\mathrm{even}}$, the order complex $S_{C,\Gamma_G(C)}^{\mathrm{even}}$ is pure and shellable. This cannot be generalized to pseudographs.
    Actually, there are infinitely many pseudographs such that $S_{C,\Gamma_G(C)}^{\mathrm{odd}}$ is not shellable for some $C\in 2^{\cC_G}_{\mathrm{even}}$.
    \begin{figure}[h]
    \begin{center}
    \begin{subfigure}[h]{.15\textwidth}
    \centering
    \begin{tikzpicture}[scale=.7]
        \draw (1,-1) node{$G$};
    	\draw (0,0)--(2,0)--(2,2)--(0,2)--cycle;
    	\draw (0,2)..controls (1,2.7)..(2,2);
    	\fill (0,0) circle(2pt);
    	\fill (2,0) circle(2pt);
    	\fill (0,2) circle(2pt);
    	\fill (2,2) circle(2pt);
    	\draw (-0.3,0) node{$4$};
    	\draw (2.3,0) node{$3$};
    	\draw (-0.3,2) node{$1$};
    	\draw (2.3,2) node{$2$};
    	\draw (1,2.7) node{$a$};
    	\draw (1,1.7) node{$b$};
    \end{tikzpicture}
    \end{subfigure}
    \begin{subfigure}[h]{.3\textwidth}
    \centering
    \begin{tikzpicture}[scale=.7]
    	\node [draw] (1) at (0,0) {$1$};
    	\node [draw] (2) at (2,0) {$2$};
    	\node [draw] (3) at (4,0) {$3$};
    	\node [draw] (4) at (6,0) {$4$};
    	\node [draw] (13) at (4,1.7) {$13$};
    	\node [draw] (24) at (6,1.7) {$24$};
    	\node [draw] (12a) at (0,3.4) {$12a$};
    	\node [draw] (12b) at (2,3.4) {$12b$};
    	\node [draw] (134) at (4,3.4) {$134$};
    	\node [draw] (234) at (6,3.4) {$234$};
    	\node [draw] (123ab) at (0,5.1) {$123ab$};
    	\node [draw] (1234a) at (4,5.1) {$1234a$};
    	\node [draw] (1234b) at (6,5.1) {$1234b$};
    	\node [draw] (124ab) at (2,5.1) {$124ab$};
    	\path (1) edge (12a);
    	\path (1) edge (12b);
    	\path (1) edge (13);
    	\path (2) edge (24);
    	\path (2) edge (12a);
    	\path (2) edge (12b);
    	\path (3) edge (13);
    	\path (4) edge (24);
    	\path (24) edge (234);
    	\path (12a) edge (123ab);
    	\path (12a) edge (124ab);
    	\path (12b) edge (123ab);
    	\path (12b) edge (124ab);
    	\path (12a) edge (1234a);
    	\path (12b) edge (1234b);
    	\path (13) edge (134);
    	\draw[blue] (13) to [out=160,in=-60] (123ab);
    	\draw[blue] (24) to [out=160,in=-60] (124ab);
    	\path (134) edge (1234a);
    	\path (134) edge (1234b);
    	\path (234) edge (1234a);
    	\path (234) edge (1234b);
    \end{tikzpicture}
    \end{subfigure}
    \begin{subfigure}{.52\textwidth}
    \centering
    \begin{tikzpicture}[scale=.7]
    	\node [draw] (14) at (0,0) {$14$};
    	\node [draw] (23) at (1.8,0) {$23$};
    	\node [draw] (34) at (3.6,0) {$34$};
    	\node [draw] (12ab) at (-2.7,1.7) {$12ab$};
    	\node [draw] (124a) at (-.9,1.7) {$124a$};
    	\node [draw] (124b) at (.9,1.7) {$124b$};
    	\node [draw] (123a) at (2.7,1.7) {$123a$};
    	\node [draw] (123b) at (4.5,1.7) {$123b$};
    	\path (14) edge (124a);
    	\path (14) edge (124b);
    	\path (23) edge (123a);
    	\path (23) edge (123b);
    \end{tikzpicture}
    \end{subfigure}
    \end{center}
    \caption{Neither $S_{\cC_G,G}^{\mathrm{odd}}$ nor $S_{\cC_G,G}^{\mathrm{even}}$ is shellable.}\label{fig:non-shellable}
    \end{figure}

    Consider the pseudograph in Figure~\ref{fig:non-shellable}. Then, for $C=\cC_G$, $\Gamma_G(C)=G$ and neither $S_{C,G}^{\mathrm{odd}}$ nor $S_{C,G}^{\mathrm{even}}$ is shellable. However, both $S_{C,G}^{\mathrm{odd}}$ and $S_{C,G}^{\mathrm{even}}$ are homotopy equivalent to a wedge of spheres; $S_{C,G}^{\mathrm{odd}} \simeq S^2\vee S^2\vee S^2$  and $S_{C,G}^{\mathrm{even}}\simeq S^0\vee S^0 \vee S^0.$
    Therefore, as a further direction of research, it would be interesting to see whether $S_{C,\Gamma_G(C)}^{\mathrm{odd}}$ is homotopy equivalent to a wedge of spheres for any $C\in 2^{\cC_G}_{\mathrm{even}}$ or not. At this moment, all the examples we have are homotopy equivalent to a wedge of spheres. It would also be interesting to characterize pseudographs $G$ such that $S_{C,\Gamma_G(C)}^{\mathrm{even}}$ or $S_{C,\Gamma_G(C)}^{\mathrm{odd}}$ is shellable for any $C\in 2^{\cC_G}_{\mathrm{even}\ast}$.

\bigskip

\end{document}